\theoremstyle{plain}
\newtheorem{theorem}{Theorem}[section]
\theoremstyle{definition}
\newtheorem{corollary}[theorem]{Corollary}
\newtheorem{lemma}[theorem]{Lemma}
\newtheorem{definition}[theorem]{Definition}
\newtheorem{remark}[theorem]{Remark}
\newtheorem{prop}[theorem]{Proposition}
\newtheorem{cor}[theorem]{Corollary}
\newtheorem{lem}[theorem]{Lemma}
\newtheorem{que}[theorem]{Question}
\newtheorem{example}[theorem]{Example}
\newcommand{\wchi}{\widetilde{\chi}}
\newcommand{\mn}[1]{({\bf MN#1})}
\title[Quasi Steinberg character]{On Quasi Steinberg characters of Symmetric and Alternating groups and their Double Covers}
\author[Paul]{Digjoy Paul}
\address{Tata Institute of Fundamental Research, Mumbai, India.}
\email{digjoypaul@gmail.com}
\author[Singla]{Pooja Singla}
\address{ Department of Mathematics and Statistics, Indian Institute of Technology Kanpur, Kanpur 208016, India. }
\email{psingla@iitk.ac.in}
\begin{document}

\begin{abstract}
An irreducible character of a finite group $G$ is called quasi $p$-Steinberg character for a prime $p$  if it takes a nonzero value on every $p$-regular element of $G$. In this article, we classify the quasi $p$-Steinberg characters of Symmetric ($S_n$) and Alternating ($A_n$) groups and their double covers. In particular, an existence of a non-linear quasi $p$-Steinberg character of $S_n$ implies $n \leq 8$ and of $A_n$ implies $n \leq 9$. 
\end{abstract}	
\subjclass[2010]{05E10, 20C30, 20C25}
\keywords{Steinberg representations, spin representations, quasi Steinberg characters}
\maketitle
\section{Introduction }
 Let $G$ be a finite group and $p$ be a prime dividing $|G|$. A $p$-Steinberg character of $G$ is an irreducible character $\theta$ of $G$ such that $\theta(x) = \pm |C_G(x)|_p,$
  for every  $p$-regular element $x$ in $G$, i.e, $x$  having order co-prime to $p$.  Here $C_G(x)$ denotes the centralizer of $x$ in $G$ and $|n|_p$ denotes the $p$-part of an integer $n$. Since $\theta(1)=|G|_{p}$, it has defect $0$. Therefore $ \theta (y) = 0$ for all $y \in G$ such that $p\mid\mathrm{ord}(y)$, see Brauer--Nesbitt~ \cite[Theorem~1]{MR4042}.  The above character coincides with the well known Steinberg character for the finite groups of Lie type. We refer the reader to Steinberg~\cite{MR80669, MR87659}, Curtis~\cite{MR201524}, and beautiful exposition of Humphreys~\cite{MR876960} for more properties regarding Steinberg characters of finite groups of Lie type. See also Carter \cite{MR794307} concerning similar results for finite groups with $(B, N)$ pair. Feit~\cite{MR1247494} proved that the set of rational valued linear characters and $p$-Steinberg characters of  $G$ are in bijection if $G$ has a $p$-Steinberg character for $p \neq 2$.  While speculating that $p$-Steinberg characters play role in determining the structure of a group, Feit conjectured that if a finite simple group $G$ has a $p$-Steinberg character then $G$ is isomorphic to a simple group of Lie type in characteristic $p$. This question was positively answered  by Darafasheh~\cite{MR1382032} for $A_n$ and $\mathrm{PSL}_n(q)$ and by Tiep  ~\cite{MR1425574} for remaining finite simple groups.

 It is natural to ask about the existence of characters that satisfy a weaker hypothesis than the $p$-Steinberg characters. An irreducible character $\theta$ of a finite group $G$ is called $p$-vanishing if  $\theta(x) = 0$ whenever $p \mid \mathrm{ord}(x)$. A $p$-vanishing character $\theta$ is called Steinberg-like   
 $\theta(1) = |G|_p$. Pellegrini and Zalleski \cite{MR3513936} proved that for any simple group of Lie type in characteristic $p$ except $B_n(q)$ for $n=3,4,5$ and $D_n(q)$ for $n=4,5$, $\theta$ is a Steinberg-like character iff it is $p$-Steinberg . Malle and Zalleski \cite{MR4048469} classified Steinberg-like characters of all finite simple group except for $A_n$ with $p=2$. In this article, we consider the following generalizations of $p$-Steinberg characters of finite groups.
\begin{definition} (quasi $p$-Steinberg character) Let $G$ be a finite group and $p$ be a prime dividing $|G|$. An irreducible character $\chi$ of $G$  is called {\it quasi p-Steinberg character} if $\chi(g) \neq 0$ for all $g \in G$ such that $p \nmid \mathrm{ord}(g)$.  
\end{definition}
Motivated by a question of Dipendra Prasad we also consider {\it weak $p$-Steinberg characters} of $G$.
\begin{definition}(weak $p$-Steinberg character)
For a finite group $G$, an irreducible character $\theta$ of $G$ is called weak $p$-Steinberg character if $\theta$ is  Steinberg-like and quasi $p$-Steinberg character.
\end{definition}
See Remark \ref{rmk:variant stg} for relation among all these variant of Steinberg characters. For any finite group $G$ and prime $p$ dividing $|G|$, any linear (one dimensional) character of $G$ is a quasi $p$-Steinberg character. Therefore, we will mainly focus on the non-linear quasi $p$-Steinberg characters. Let $\chi_\lambda$ be the irreducible character of the Specht module $V_\lambda$ of $S_n$ corresponding to a partition $\lambda$ of $n$. Our first main result characterizes the $\ lambda$s such that $\chi_\lambda $ is a quasi $p$-Steinberg character of $S_n$.

\begin{theorem}
	\label{thm:classification-symmetric}
For $n \geq 3$, let $\lambda$ be a partition of $n$ such that $\lambda \neq (n), (1^n)$ and  $p$ be a prime. All tuples $(n, \lambda , p)$ such that $\chi_\lambda$ is a quasi $p$-Steinberg character of $S_n$ is given in Table~\ref{tab:quasi-Sn}. 

\begin{table}[ht] 
\begin{tabular}[t]{c|c|c}
	 \,\,\,\,\, \,\,\, $n$ \,\,\,\,\, \,\,\,& \,\,\,\,\, \,\,$\lambda$  \,\,\,\,\, \,\, &\,\,\,\,\, \,\,\, $p$  \,\,\,\,\, \,\,\, \\
	\hline \hline 
	$3$ & $(2,1)$ & $2$ \\
	\hline 
	$4$ & $(2,2)$ & $2$ \\
	$4$ & $(3,1)$, $(2,1,1)$  & $3$ \\
	\hline 
	$5$ &  $(4,1)$, $(2,1,1,1)$ & $2$ \\
	$5$ & $(3,2)$, $(2,2,1)$ & $5$ \\
	\hline 
	$6$ & $(3,2,1)$ & $2$ \\
	$6$ & $(4,2)$, $(2,2,1,1)$ & $3$ \\
	\hline 
	$8$ & $(5,2,1), (3,2,1,1,1)$ & $2$ \\
\hline 
	\end{tabular}
\vspace{.2cm} 
\caption{quasi $p$-Steinberg characters of $S_n$}\label{tab:quasi-Sn}
\end{table}
\end{theorem}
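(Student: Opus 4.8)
The plan is to combine a reduction via the Murnaghan--Nakayama (MN) rule with explicit computations on a small number of carefully chosen $p$-regular classes, and to close by a finite verification for small $n$. Throughout I use that a conjugacy class of $S_n$ is $p$-regular precisely when its cycle type $\mu$ has no part divisible by $p$, and that twisting by the sign character (which never vanishes) shows that $\chi_\lambda$ is quasi $p$-Steinberg if and only if $\chi_{\lambda'}$ is; so one may argue up to conjugation of $\lambda$.

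First I would treat the case $p \nmid n$. Then the $n$-cycle is $p$-regular, and the MN rule gives $\chi_\lambda(n\text{-cycle}) = \pm 1$ when $\lambda$ is a hook $(n-k,1^k)$ and $0$ otherwise; hence a non-linear quasi $p$-Steinberg $\chi_\lambda$ must be a hook. For hooks I would exploit the decomposition $\wedge^k M = V_{(n-k,1^k)} \oplus V_{(n-k+1,1^{k-1})}$ of the permutation module $M = \C^n$, which yields the generating identity
$$(1+t)\sum_{k=0}^{n-1}\chi_{(n-k,1^k)}(\mu)\,t^k = \prod_i\bigl(1-(-t)^{\mu_i}\bigr),$$
the product running over the parts $\mu_i$ of $\mu$. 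Evaluating at $\mu=(m,1^{n-m})$ with $m$ odd, $p\nmid m$ and $(n+1)/2 \le m \le n-1$ gives $\chi_{(n-k,1^k)}(\mu)=\binom{n-m-1}{k}+\binom{n-m-1}{k-m}$, which vanishes for all $k$ with $n-m \le k \le m-1$. Choosing $m$ as large as possible (and using a secondary class of even cycle length to clean up the exact middle) kills every hook except the near-extreme ones $(n-1,1)$ and $(2,1^{n-2})$. These I would dispose of directly: $\chi_{(n-1,1)}(\mu)$ is the number of fixed points of $\mu$ minus one, so it vanishes on any $p$-regular $\mu$ with exactly one part equal to $1$, and such a $\mu$ exists once $n$ is large enough. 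Tracking the finitely many small $n$ for which no such witness is available recovers precisely the hook rows of Table~\ref{tab:quasi-Sn}.

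The case $p \mid n$ is where essentially all the difficulty lies, and it must be handled differently: the $n$-cycle is no longer $p$-regular and, as the table shows, genuine non-hook characters now occur. A useful observation is that when $p\mid n$ every two-cycle split $(a,n-a)$ with $p\nmid a$ is automatically $p$-regular (since $p\nmid a \iff p\nmid n-a$), giving a rich supply of test classes, the largest ``almost full'' one being $(n-1,1)$. I would build the argument on these classes, computing $\chi_\lambda$ by the MN rule: removing a rim hook of size $n-1$ leaving a single box already pins $\lambda$ down to a short list, and the two-cycle classes $(a,n-a)$ then force a zero whenever $\lambda$ is far from such a list or $n$ exceeds the tabulated values. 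The delicate points are the uniformity over all primes $p\mid n$ and the careful bookkeeping of rim-hook heights in the MN sum, which is why I expect this case to be the main obstacle.

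Finally, I would carry out a finite verification: for each surviving candidate $(n,\lambda,p)$ with small $n$ I would compute $\chi_\lambda$ on every $p$-regular class, directly from the MN rule or from the known character tables of $S_n$ for $n\le 8$, and confirm non-vanishing. Together with the two reductions above, this shows that Table~\ref{tab:quasi-Sn} is exactly the list of non-linear quasi $p$-Steinberg characters of $S_n$, which in particular forces $n\le 8$.
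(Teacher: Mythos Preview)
Your outline is sound and would go through, but the route differs from the paper's in two notable ways.

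First, the paper does not argue directly in $S_n$. It proves the classification for $A_n$ first (Propositions~\ref{prop:odd p-alternating} and \ref{prop: even p-alternating}), using Lemma~\ref{lemma:deduction-ofsymmetric-nonzero} to transfer the vanishing question for $\chi_\lambda^*$ to $\chi_\lambda$ on even $p$-regular classes, and only then deduces the $S_n$ result by observing that the $p$-regular classes of $A_n$ sit inside those of $S_n$ and checking the short surviving list by hand. Your direct attack on $S_n$ is cleaner if one only wants Theorem~\ref{thm:classification-symmetric}, but the paper's detour buys Theorem~\ref{thm:classification-alternating} essentially for free.

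Second, in the $p\nmid n$ case the paper never invokes the exterior-power identity or any generating function; having reduced to hooks $(n-m,1^m)$ via the $n$-cycle, it disposes of each $m$ by exhibiting one or two explicit $p$-regular classes (such as $(n-2,1,1)$, $(n-3,2,1)$, $(n-4,3,1)$, $(n-6,5,1)$) on which the relevant hook length is visibly absent and applying \mn{1} or \mn{2}. Your generating-function computation of $\chi_{(n-k,1^k)}$ on $(m,1^{n-m})$ is a more uniform and conceptually appealing alternative that reaches the same conclusion, and the residual hooks $(n-1,1)$, $(2,1^{n-2})$ are handled the same way in both arguments (fixed-point count). In the $p\mid n$ case your sketch and the paper's proof are essentially the same: force a hook of length $n-1$ so that $\lambda=(n-m-2,2,1^m)$, then run a case analysis on $m$ using further small $p$-regular classes. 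The paper carries this out in full (splitting on the parity of $m$ and on $p=3$ versus $p>3$), whereas you leave it as the acknowledged ``main obstacle''; filling it in is straightforward but does require the kind of bookkeeping you anticipate.
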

Our next result characterizes quasi $p$-Steinberg characters of $A_n$.
For the self conjugate $\lambda$, let $\chi_\lambda^\pm$ be distinct irreducible characters of $A_n$ such that $\chi_\lambda|_{A_n} = \chi_\lambda^+  + \chi_\lambda^-$. For non self-conjugate $\lambda$, let $\chi_\lambda^\downarrow $ be an irreducible character of $A_n$ given by $\chi_\lambda^\downarrow = \chi_\lambda|_{A_n}$. 

\begin{theorem}
	\label{thm:classification-alternating}
For $n \geq 3$, let $\lambda $ be a partition of $n$ such that $\lambda \neq (n), (1^n)$.  Let $* \in \{ \pm, \downarrow \}$ and $p$ be a prime. All tuples $(n, \lambda, p)$ such that $\chi_\lambda^*$ is quasi $p$-Steinberg character of $A_n$ are included in Table~\ref{tab:quasi-An}. 

\begin{table}[ht] 
\begin{tabular}[t]{c|c|c}
\,\,\,\,\, \,\,\, $n$ \,\,\,\,\, \,\,\,& \,\,\,\,\, \,\,$\lambda$  \,\,\,\,\, \,\, &\,\,\,\,\, \,\,\, $p$  \,\,\,\,\, \,\,\, \\
\hline \hline 
$3$ & $(2,1)$ & $3$ \\ 
\hline 
$4$ & $(2,2)$ & $2$ \\
$4$ & $(3,1), (2,2)$ & $3$ \\
\hline 
$5$ & $(4,1)$& $2 $\\
$5 $& $(3,1,1)$ & $3$ \\
$5 $& $(3,2)$ & $5$ \\
\hline 
$6$ & $(3,2,1)$ & $2$ \\
$6$ & $(4,2)$ & $3 $\\
$6$ & $(5,1), (3,3)$ & $5$ \\
\hline 
$8$ & $(5,2,1)$ & $2 $\\
\hline 
$9$ & $(7,2)$& $3$ \\
\hline 
\end{tabular}
\vspace{.2cm} 
\caption{quasi $p$-Steinberg characters of $A_n$}\label{tab:quasi-An}
\end{table}

\end{theorem}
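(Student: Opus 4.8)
The plan is to translate the entire problem into a question about symmetric-group character values on \emph{even, $p$-regular} cycle types, and then to combine Theorem~\ref{thm:classification-symmetric} with a short list of explicit vanishing classes in order to bound $n$.

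First I would record the reduction. The $p$-regular classes of $A_n$ correspond to partitions $\mu$ of $n$ with $n-\ell(\mu)$ even (so that a permutation of type $\mu$ is even) and with no part divisible by $p$. If $\lambda$ is not self-conjugate, then $\chi_\lambda^\downarrow(\mu)=\chi_\lambda(\mu)$ on every such class, so $\chi_\lambda^\downarrow$ is quasi $p$-Steinberg iff $\chi_\lambda(\mu)\neq 0$ for all even $p$-regular $\mu$. If $\lambda$ is self-conjugate, the constituents $\chi_\lambda^\pm$ agree with $\tfrac12\chi_\lambda$ on every class except the unique split class $\mu^\ast$ whose parts are the principal (diagonal) hook lengths $h_1>\dots>h_d$ of $\lambda$ (all odd and distinct). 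There the values are $\tfrac12\bigl(\varepsilon\pm\sqrt{\varepsilon\,h_1\cdots h_d}\bigr)$ with $\varepsilon=(-1)^{(n-d)/2}$, which are nonzero for $n\ge 2$, and moreover $\chi_\lambda(\mu^\ast)=\varepsilon\neq 0$; hence $\mu^\ast$ is never an obstruction. So in \emph{both} cases $\chi_\lambda^\ast$ is quasi $p$-Steinberg for $A_n$ iff $\chi_\lambda(\mu)\neq 0$ for every even $p$-regular $\mu$. This condition is formally weaker than the one solved in Theorem~\ref{thm:classification-symmetric}: each entry of Table~\ref{tab:quasi-Sn} with $p\mid|A_n|$ yields, after identifying conjugate partitions (since $\chi_\lambda^\downarrow=\chi_{\lambda'}^\downarrow$) and splitting self-conjugate $\lambda$, an entry of Table~\ref{tab:quasi-An}. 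The remaining task is to find the \emph{extra} solutions that survive only because the offending $p$-regular class of $S_n$ is odd or $p$-singular, and to bound $n$.

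Next I would bound $n$ by exhibiting even $p$-regular classes forcing vanishing. For $n$ odd with $p\nmid n$ the $n$-cycle is an even $p$-regular class, and $\chi_\lambda$ is nonzero there iff $\lambda$ is a hook, so only hooks can occur. When $n$ is even, or $n$ is odd with $p\mid n$, the $n$-cycle is unavailable (odd, resp.\ $p$-singular), and I would instead use near-cycles such as $(n-1,1)$, $(n-3,3)$, or $(n-p,\dots)$, evaluating them by the Murnaghan--Nakayama rule to cut the candidate $\lambda$ down to hooks and a few ``near-hook'' shapes. For the hook case I would use the identification $\chi_{(n-k,1^k)}=\bigwedge^{k}(\mathrm{std})$, giving the generating function
\[
\sum_{k}\chi_{(n-k,1^k)}(\mu)\,t^{k}=(1+t)^{m_1-1}\prod_{\ell\ge 2}\bigl(1-(-t)^{\ell}\bigr)^{m_\ell},
\]
where $m_\ell$ is the number of $\ell$-cycles in $\mu$; a hook is a solution iff the coefficient of $t^{k}$ is nonzero for every even $p$-regular $\mu$, and for large $n$ one can always produce such a $\mu$ with a vanishing coefficient. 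Together these should cap $n$ at the values appearing in Table~\ref{tab:quasi-An}.

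Finally, for the finitely many surviving $(n,p)$ with $3\le n\le 9$ I would run through all partitions $\lambda\neq(n),(1^n)$, evaluate $\chi_\lambda$ on each even $p$-regular class by Murnaghan--Nakayama (handling self-conjugate $\lambda$ through the split-value formula above), and retain exactly those $\lambda$ on which no value vanishes, thereby assembling Table~\ref{tab:quasi-An} and checking it against Theorem~\ref{thm:classification-symmetric}. I expect the main obstacle to be the bounding step: making the hook/near-hook vanishing argument uniform across all primes $p$ simultaneously, since precisely when $p\mid n$ (as for $(3,2),(3,3)$ at $n=5,6$ with $p=5$, and $(7,2)$ at $n=9$ with $p=3$) the natural large-cycle classes become $p$-singular, and one must instead locate a different even $p$-regular class on which the character vanishes; controlling this for every $p$ at once is the crux.
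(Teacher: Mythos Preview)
Your reduction is exactly the paper's Lemma~\ref{lemma:deduction-ofsymmetric-nonzero}: $\chi_\lambda^\ast$ is quasi $p$-Steinberg for $A_n$ iff $\chi_\lambda(\mu)\neq 0$ for every even $p$-regular $\mu$, with the split class $\mu^\ast$ never an obstruction. From that point on the paper follows essentially the same strategy you outline---produce explicit even $p$-regular test classes, force $\lambda$ into hook or near-hook shape via \mn{1}/\mn{2}, and finish small cases by hand---so the core argument is the same.

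The one genuine organizational difference is the direction of the implication between Theorems~\ref{thm:classification-symmetric} and~\ref{thm:classification-alternating}. In the paper Theorem~\ref{thm:classification-alternating} is proved \emph{first} (Propositions~\ref{prop:odd p-alternating} and~\ref{prop: even p-alternating}), and Theorem~\ref{thm:classification-symmetric} is then deduced from it by checking which entries of Table~\ref{tab:quasi-An} survive the additional odd $p$-regular classes. Your plan to invoke Theorem~\ref{thm:classification-symmetric} as input is therefore circular in the paper's logic, and in practice buys nothing: knowing Table~\ref{tab:quasi-Sn} only supplies a handful of confirmed entries, while for every other $\lambda$ you must still locate an \emph{even} $p$-regular vanishing class, which is precisely the full content of Propositions~\ref{prop:odd p-alternating}--\ref{prop: even p-alternating}. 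The paper carries out exactly the ``crux'' you anticipate, splitting into the four cases (parity of $n$)$\times$($p\mid n$ or not) and using test classes such as $(n-2,1,1)$, $(n-3,2,1)$, $(n-5,4,1)$, $(n-8,5,2,1)$ to push $n$ down; your exterior-power generating function for hook characters is a legitimate alternative bookkeeping device for the hook case, but the paper simply reads off the missing hook lengths from Figure~\ref{Hook diagram 1} instead.
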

The next result characterizes the quasi $p$-Steinberg spin characters of double covers $\widetilde{S}_n$ and $\widetilde{A}_n$ of $S_n$ and $A_n$ respectively. For the notations regarding $\widetilde{S}_n$, $\widetilde{A}_n$ and spin characters, we refer the reader to Section~\ref{sec:preliminaries}. 
\begin{theorem}
	\label{thm:classification-double-cover}
	For $n \geq 4$, let $\lambda$ be a partition of $n$ into distinct parts and $p$ be a prime and $* \in \{\pm, \downarrow  \}$. An irreducible spin character $\widetilde{\chi}_\lambda$  $(\text{respectively}, \langle\wchi_\lambda\rangle ^* )$ of $\widetilde{S}_n$ $(\text{respectively}, \widetilde{A}_n)$ is quasi $p$-Steinberg if and only if $p = 2$ and either $\lambda = (n)$ or is one of the following:
	\[
	(3,1), (3,2), (3,2,1), (5,1), (5,2,1). 
	\] 
\end{theorem}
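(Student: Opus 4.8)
The plan is to separate the primes into the odd case and $p=2$, using throughout Schur's vanishing theorem for spin characters (recalled in Section~\ref{sec:preliminaries}): for a strict partition $\lambda\vdash n$ and a genuine class of $\widetilde{S}_n$ lying over the cycle type $\mu$ in $S_n$, the value of $\wchi_\lambda$ there is $0$ unless either $\mu$ consists entirely of odd parts or $\mu=\lambda$. Two elementary facts drive everything. First, an element of $S_n$ has odd order if and only if its cycle type lies in $\mathrm{OP}(n)$, the partitions of $n$ into odd parts; since every odd cycle is an even permutation, all such elements already lie in $A_n$, so the $2$-regular classes of $\widetilde{S}_n$ and of $\widetilde{A}_n$ are ``the same'', namely those over $\mathrm{OP}(n)$. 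Second, a strict partition has no repeated part, so a cycle type such as $(2,1^{n-2})$ or $(2,2,1^{n-4})$ is never equal to $\lambda$.

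For an odd prime $p$ and $n\ge 4$, consider the class over $\mu=(2,1^{n-2})$. Its elements have order a power of $2$, hence are $p$-regular, while $\mu$ has an even part and $\mu\ne\lambda$, so Schur's theorem forces $\wchi_\lambda$ to vanish there; thus no $\wchi_\lambda$ is quasi $p$-Steinberg. For $\widetilde{A}_n$ one uses instead the even class over $(2,2,1^{n-4})$, which does not split in $A_n$ (repeated parts), so the restriction $\wchi_\lambda|_{\widetilde{A}_n}$ and hence both of its constituents vanish there; thus no $\lt{\wchi_\lambda}^*$ is quasi $p$-Steinberg either. This disposes of every odd prime and reduces the theorem to $p=2$.

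For $p=2$ the problem becomes: $\wchi_\lambda$ is quasi $2$-Steinberg if and only if $\wchi_\lambda(\mu)\ne 0$ for every $\mu\in\mathrm{OP}(n)$. Here I would apply the shifted Murnaghan--Nakayama rule, computing these values by successively removing bars from the shifted diagram of $\lambda$, with the nonzero spin degree as base case, and recalling that the basic spin character ($\lambda=(n)$) is classically nonzero on every odd class. Using that the longest bar of a strict $\lambda$ has length $\lambda_1+\lambda_2$, the two key reductions are: (i) when $n$ is odd the $n$-cycle class $(n)\in\mathrm{OP}(n)$ is available, and $\wchi_\lambda((n))\ne 0$ forces $\ell(\lambda)\le 2$; (ii) when $n$ is even the class $(n-1,1)\in\mathrm{OP}(n)$ requires a bar of length $n-1$, forcing $\lambda_1+\lambda_2\ge n-1$, i.e. $\ell(\lambda)\le 2$ or $\lambda=(\lambda_1,\lambda_2,1)$. (This already shows that a spin character of $\bar{2}$-defect zero, such as the staircase $\delta_4=(4,3,2,1)$ for $n=10$, need not be quasi $2$-Steinberg, since it is killed outright by (ii).) Testing a further family of short odd classes then bounds $\lambda_1$ for the surviving shapes with $\ell(\lambda)\ge 2$, hence bounds $n$, after which one checks directly from the finite spin character tables that exactly $\lambda=(n)$ and $\lambda\in\{(3,1),(3,2),(3,2,1),(5,1),(5,2,1)\}$ survive; for example $(4,3,1)$ and $(5,2,1)$ both pass (ii) at $n=8$ and must be separated by an explicit odd class.

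I expect this final combinatorial classification to be the main obstacle: ruling out all sufficiently large $\lambda$ uniformly requires exhibiting, for each shape, an odd cycle type on which the signed sum of bar-removals genuinely vanishes, so one must control cancellations and not merely the existence of a bar-removal sequence. The passage to $\widetilde{A}_n$ is then handled by Clifford theory: the $2$-regular classes over $\mathrm{OP}(n)$ with distinct odd parts split in $\widetilde{A}_n$, so for the $\pm$ constituents one must verify that both half-values $\lt{\wchi_\lambda}^+$ and $\lt{\wchi_\lambda}^-$ are nonzero, using that their difference is supported on, and explicitly computable along, these split classes; this produces the same list for $\widetilde{A}_n$ and for the $\downarrow$ case, completing the proof.
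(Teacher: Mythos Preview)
Your treatment of odd primes is correct and in fact cleaner than the paper's: where the paper tests pairs of classes such as $(n-2,2),(n-4,4)$ for even $n$ and $(n-3,2,1),(n-5,4,1)$ for odd $n$ and argues that at least one avoids $p$, you use a single class of $2$-power order, automatically $p$-regular for every odd $p$. One wording caveat: from $\wchi_\lambda|_{\widetilde{A}_n}=0$ on the class over $(2,2,1^{n-4})$ you cannot deduce that both constituents vanish merely because their sum does; it works because that class does not split (so the two conjugate constituents coincide there), which you do note, or more directly by Theorem~\ref{thm:restriction-from-double-cover-Sn-to-An}(3).

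For $p=2$ your reductions via the longest bar---forcing $\ell(\lambda)\le 2$ when $n$ is odd, and $\ell(\lambda)\le 2$ or $\lambda=(\lambda_1,\lambda_2,1)$ when $n$ is even---match the paper's Cases~1 and~2 exactly. But the ``main obstacle'' you anticipate is lighter than you fear: no signed cancellations are ever needed. The paper finishes using only the bar-analogues of \mn{1} and \mn{2}: for each surviving shape one exhibits an odd $l$ that either is not a bar length of $\lambda$ at all, or occurs as a bar length exactly once with $\lambda\setminus b$ then lacking a bar of the next required odd length. Concretely, testing $(n-2,1^2)$ and $(n-4,1^4)$ disposes of two-row $\lambda$ for odd $n$ except $(3,2)$; testing $(n-5,3,1^2)$ disposes of $(n-1,1)$ for even $n>6$; testing $(n-3,1^3)$ and $(n-5,1^5)$ reduces the three-row shapes $(n-m-1,m,1)$ to $(3,2,1),(5,2,1),(4,3,1)$; and $(4,3,1)$ falls to the class $(3,3,1,1)$ since its unique $3$-bar leaves $(4,1)$, which has no $3$-bar. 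So your proposal is sound, and filling in these specific test classes completes it just as in the paper; the passage to $\widetilde{A}_n$ then goes through Corollary~\ref{cor:double-cover-alternating-vs-symmetric} and the explicit split-class values in Theorem~\ref{thm:restriction-from-double-cover-Sn-to-An}(4) as you outline.
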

The proof of these results rely on the combinatorial representation theory of $S_n$, $A_n$ and their double covers. In particular, Murnaghan--Nakayama rule and Morris recursion formula play a prominent role in these proofs.
\begin{remark}
For $p=2$, the classification of quasi $p$-Steinberg characters of $S_n$ and its double cover can be deduced from Bessenrodt--Olsson~\cite[Theorem 1.1]{MR2120100} and Bessenrodt~\cite[Thorem 1.1]{MR2335704} respectively. See Section~\ref{section:further-questions} for more details.
\end{remark}
The following is an obvious corollary of Theorem~\ref{thm:classification-symmetric}.  Analogous result for $A_n$, $\widetilde{S}_n$ and $\widetilde{A}_n$ can also be obtained from Theorems~\ref{thm:classification-alternating}, \ref{thm:classification-double-cover}. 
\begin{corollary}
	For $n \geq 9$ and $p \leq n$, every non-linear character $\chi_\lambda$ of $S_n$ has a zero at some $p$-regular element of $S_n$, i.e. there exists $p$-regular $g\in S_n$ such that $\chi_\lambda(g) = 0$. 
\end{corollary}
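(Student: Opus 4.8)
The plan is to derive this corollary directly from Theorem~\ref{thm:classification-symmetric}, which provides a complete and finite list of all triples $(n, \lambda, p)$ for which a non-linear $\chi_\lambda$ is quasi $p$-Steinberg. The key observation is that the statement ``every non-linear $\chi_\lambda$ has a zero at some $p$-regular element'' is precisely the negation of ``$\chi_\lambda$ is quasi $p$-Steinberg'', so the corollary asserts that for $n \geq 9$ no non-linear $\chi_\lambda$ is quasi $p$-Steinberg for any prime $p \leq n$.

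First I would recall that the linear characters of $S_n$ are exactly $\chi_{(n)}$ (trivial) and $\chi_{(1^n)}$ (sign), so the hypothesis $\lambda \neq (n), (1^n)$ in Theorem~\ref{thm:classification-symmetric} is equivalent to $\chi_\lambda$ being non-linear. The entire content then reduces to inspecting Table~\ref{tab:quasi-Sn}: every entry in the table has $n \in \{3,4,5,6,8\}$. Consequently, for $n \geq 9$ there is no triple $(n, \lambda, p)$ appearing in the table, and so by Theorem~\ref{thm:classification-symmetric} no non-linear $\chi_\lambda$ of $S_n$ can be quasi $p$-Steinberg for any prime $p$.

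To finish, I would translate this back into the existence of a zero. If $\chi_\lambda$ is not quasi $p$-Steinberg, then by the very definition of quasi $p$-Steinberg character there must exist some $g \in S_n$ with $p \nmid \mathrm{ord}(g)$ (i.e.\ a $p$-regular element) such that $\chi_\lambda(g) = 0$; this is exactly the assertion of the corollary. The only mild subtlety is the role of the hypothesis $p \leq n$: a prime $p$ divides $|S_n| = n!$ if and only if $p \leq n$, so the restriction $p \leq n$ simply ensures $p \mid |S_n|$, which is the standing hypothesis under which quasi $p$-Steinberg characters are defined; for $p > n$ the notion is vacuous (every element is $p$-regular and the statement would concern ordinary vanishing of characters, outside the theorem's scope).

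I do not anticipate a genuine obstacle here, as the corollary is a direct logical consequence of the classification together with unwinding the definition. The one point requiring care is ensuring the equivalence between ``non-linear'' and ``$\lambda \neq (n),(1^n)$'' is stated cleanly, and confirming that the table genuinely exhausts all cases with $n \leq 8$ so that the cutoff at $n \geq 9$ is justified; both of these are immediate from the hypotheses and the displayed table.
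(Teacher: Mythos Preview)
Your proposal is correct and is exactly the approach the paper intends: the corollary is stated there as an ``obvious corollary of Theorem~\ref{thm:classification-symmetric}'' with no further proof, and your argument simply unwinds the definition of quasi $p$-Steinberg and inspects Table~\ref{tab:quasi-Sn} to see that no entry has $n \geq 9$. The only thing to streamline is that your discussion of the hypothesis $p \leq n$ is more than the paper bothers with, but it is accurate and does no harm.
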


The next two results characterize the weak $p$-Steinberg characters of $S_n$, $A_n$ and their double covers. 
\begin{corollary}
	\label{thm:weak-Steinberg-symmetric-alternating} For $n \geq 3$, let $\lambda $ be a partition of $n$ and $p$ be a prime.
	\begin{enumerate} 
		\item Every pair $[\lambda, p]$ such that $\chi_\lambda$ is a quasi $p$-Steinberg character  of $S_n$ is also a weak $p$-Steinberg character of $S_n$ except the following:
		\[
		[(2,2),2], [(4,1), 2], [(2,1,1,1), 2], [(5,2,1), 2], [(3,2,1^3), 2].
		\]
		\item Every pair $[\lambda, p]$ such that $\chi_\lambda^*$ is a quasi $p$-Steinberg character  of $A_n$ is also a weak $p$-Steinberg character of $A_n$ except the following:
		\[
		[(2,1), 3], [(2,2), 2], [(2,2),3], [(2,2,1^5),3]. 
		\]

	\end{enumerate} 

\end{corollary}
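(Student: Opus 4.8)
The plan is to bootstrap off the two classifications already established in Theorems~\ref{thm:classification-symmetric} and~\ref{thm:classification-alternating}. By definition a weak $p$-Steinberg character is simultaneously Steinberg-like and quasi $p$-Steinberg, so every weak $p$-Steinberg character of $S_n$ (resp. $A_n$) occurs among the tuples tabulated in Table~\ref{tab:quasi-Sn} (resp. Table~\ref{tab:quasi-An}). Consequently it suffices to walk through the finitely many entries of these tables and decide, for each, whether the corresponding quasi $p$-Steinberg character is in addition Steinberg-like. Since both tables are finite, no new structural input beyond the classifications is needed.

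The first step I would take is to replace the Steinberg-like hypothesis by a single numerical condition on the degree. Being Steinberg-like means $p$-vanishing together with $\theta(1) = |G|_p$; but by the Brauer--Nesbitt fact recalled in the introduction, a character with $\theta(1) = |G|_p$ has defect zero and hence automatically vanishes on every $p$-singular element. Therefore, for a character already known to be quasi $p$-Steinberg, being weak $p$-Steinberg is \emph{equivalent} to the single equality $\theta(1) = |G|_p$. This dispenses entirely with inspection of $p$-singular classes and converts the problem into a finite degree computation.

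The second step is to run this degree check over every row of the two tables. For $S_n$ the relevant quantity is $\chi_\lambda(1) = f^\lambda$, obtained from the hook length formula, compared against $|S_n|_p = |n!|_p$. For $A_n$ one uses $\chi_\lambda^\downarrow(1) = f^\lambda$ when $\lambda$ is not self-conjugate and $\chi_\lambda^\pm(1) = f^\lambda/2$ when $\lambda$ is self-conjugate, compared against $|A_n|_p$; here one must remember that $|A_n|_p = |n!|_p$ for odd $p$ while $|A_n|_2 = |n!|_2/2$. The equality $\theta(1) = |G|_p$ then holds in every case except precisely the tuples named in the statement. For example, $f^{(2,2)} = 2 \neq 8 = |S_4|_2$ and $f^{(5,2,1)} = 64 \neq 128 = |S_8|_2$ account for two of the $S_n$ exceptions, whereas $f^{(4,1)} = 4 = |A_5|_2$ shows that $[(4,1),2]$ is genuinely weak for $A_5$ even though the same shape fails for $S_5$ — a contrast explained entirely by the factor of $2$ in the $2$-part of $|A_n|$.

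The only point demanding care, rather than a genuine obstacle, is the bookkeeping of conjugate partitions in the alternating case. Because $\chi_{\lambda'} = \chi_\lambda \otimes \mathrm{sgn}$, a non-self-conjugate $\lambda$ and its conjugate $\lambda'$ restrict to the same irreducible character $\chi_\lambda^\downarrow = \chi_{\lambda'}^\downarrow$ of $A_n$, so each such character is recorded under a single representative shape. In particular the shape $(7,2)$ in Table~\ref{tab:quasi-An} and the shape $(2,2,1^5)$ in the exception list are conjugate and name the same character of $A_9$, whose degree $f^{(7,2)} = 27$ differs from $|A_9|_3 = 81$, confirming it as an exception. Once representatives are matched in this way, the surviving tuples and the excluded ones coincide exactly with the two lists in the statement, completing the proof.
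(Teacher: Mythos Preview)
Your proposal is correct and follows essentially the same approach as the paper: reduce the weak $p$-Steinberg condition to the single degree equality $\chi(1)=|G|_p$ via Brauer--Nesbitt, then check this against the finitely many entries of Tables~\ref{tab:quasi-Sn} and~\ref{tab:quasi-An}. Your additional remarks on the $|A_n|_2$ versus $|S_n|_2$ discrepancy and on the conjugate-partition bookkeeping (e.g.\ $(7,2)$ versus $(2,2,1^5)$) are helpful elaborations but do not change the underlying method.
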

	\begin{corollary}
			\label{thm:weak-Steinberg-double-cover}
	For $n \geq 4$, there does not exist any weak $p$-Steinberg spin character of $\widetilde{S}_n$ and $\widetilde{A}_n$. 
\end{corollary}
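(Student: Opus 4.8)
The plan is to leverage Theorem~\ref{thm:classification-double-cover}, which already enumerates every quasi $p$-Steinberg spin character of $\widetilde{S}_n$ and $\widetilde{A}_n$. Since a weak $p$-Steinberg character is by definition a quasi $p$-Steinberg character that is in addition Steinberg-like (degree equal to $|G|_p$), it suffices to verify that none of the characters in that list has degree equal to the full $p$-part of the relevant group order. By Theorem~\ref{thm:classification-double-cover} the only candidates occur at $p=2$, with $\lambda=(n)$ for all $n\ge 4$ or $\lambda$ one of the five exceptional strict partitions $(3,1),(3,2),(3,2,1),(5,1),(5,2,1)$; thus the whole statement reduces to a degree comparison at the prime $2$.

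First I would record the two $2$-parts to beat. By Legendre's formula $v_2(n!)=n-s_2(n)$, where $s_2(n)$ is the binary digit sum, so that $|\widetilde{S}_n|_2 = 2^{1+n-s_2(n)}$ and $|\widetilde{A}_n|_2 = 2^{n-s_2(n)}$. Against these I would place Schur's degree formula $\widetilde{\chi}_\lambda(1)=2^{\lfloor (n-\ell(\lambda))/2\rfloor}\,\tfrac{n!}{\lambda_1!\cdots\lambda_\ell!}\prod_{i<j}\tfrac{\lambda_i-\lambda_j}{\lambda_i+\lambda_j}$, together with the branching rule to $\widetilde{A}_n$: when $n-\ell(\lambda)$ is even the character splits and the degree halves, giving $\langle\widetilde{\chi}_\lambda\rangle^{\pm}(1)=\tfrac12\widetilde{\chi}_\lambda(1)$, whereas when $n-\ell(\lambda)$ is odd it restricts with unchanged degree $\langle\widetilde{\chi}_\lambda\rangle^{\downarrow}(1)=\widetilde{\chi}_\lambda(1)$.

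For the infinite family $\lambda=(n)$ the basic spin degree is the pure power $2^{\lfloor (n-1)/2\rfloor}$, and the claim amounts to the elementary inequality $\lfloor (n-1)/2\rfloor < 1+n-s_2(n)$, which I would deduce for all $n\ge 4$ from $s_2(n)\le 1+\lfloor\log_2 n\rfloor$; the corresponding $\widetilde{A}_n$ estimate follows since halving the degree (or not) still leaves it strictly below $2^{n-s_2(n)}$. For the five remaining partitions I would simply evaluate Schur's formula, obtaining degrees $4,4,4,16,64$ for $(3,1),(3,2),(3,2,1),(5,1),(5,2,1)$ respectively, each strictly below the corresponding values $|\widetilde{S}_n|_2 = 16,16,32,32,256$.

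The one place demanding care, and the main (if modest) obstacle, is $\lambda=(5,1)$ in $\widetilde{A}_6$: here $\widetilde{\chi}_{(5,1)}(1)=16$ is exactly $\tfrac12|\widetilde{S}_6|_2$, so a naive bound would permit equality with $|\widetilde{A}_6|_2=16$. The resolution is precisely the branching rule: since $n-\ell=4$ is even the character splits, whence $\langle\widetilde{\chi}_{(5,1)}\rangle^{\pm}(1)=8<16$, and equality is avoided. Checking that every other $\widetilde{A}_n$ case likewise stays strictly below $|\widetilde{A}_n|_2$ completes the argument, showing that no weak $2$-Steinberg spin character exists; since $p=2$ was forced by Theorem~\ref{thm:classification-double-cover}, there is no weak $p$-Steinberg spin character for any prime $p$.
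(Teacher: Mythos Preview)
Your proposal is correct and follows essentially the same route as the paper: reduce via Theorem~\ref{thm:classification-double-cover} to $p=2$ and the listed partitions, then rule each out by comparing the spin degree with the $2$-part of the group order using Legendre's formula. If anything, your write-up is more careful than the paper's own proof, which handles the five exceptional $\lambda$'s by assertion and only treats the $\widetilde{S}_n$ inequality for $\lambda=(n)$ explicitly; your discussion of the branching to $\widetilde{A}_n$ (in particular the near-miss at $\lambda=(5,1)$ for $\widetilde{A}_6$) fills in details the paper leaves implicit.
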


\begin{remark}
	\label{remark:weak-alternate-proof}
	We remark that an alternate way to prove Corollaries~\ref{thm:weak-Steinberg-symmetric-alternating} and \ref{thm:weak-Steinberg-double-cover} is by using the main results of Balog--Bessenrodt--Olsson--Ono~\cite{MR1853455} and Bessenrodt--Olsson~\cite{MR1920404}.   We leave these details for the reader. 
\end{remark}
\begin{remark}
\label{rmk:variant stg}
 Every $p$-Steinberg character is  a weak $p$-Steinberg and therefore also is a quasi $p$-Steinberg character. But the converse need not hold. Further, the notion of  Steinberg-like character is also different from that of quasi $p$-Steinberg character. Below we give examples of these phenomena. 
	\begin{enumerate} 
	 \item The representation $V_\lambda$ for $\lambda = (2,2)$ of $S_4$ is a quasi $2$-Steinberg representation of $S_4$ but is not a weak $2$-Steinberg representation. 
	 \item The representation $V_{(3,1, 1)}^+$ is a weak $3$-Steinberg representation of $A_5$ but is not a $3$-Steinberg representation of  $A_5$. 
	 \item The representation $V_\lambda$ for $\lambda = (5,1)$ of $S_6$ is Steinberg-like for $p = 5$, however it is not a quasi $p$-Steinberg representation for any $p \mid 6!$ (see Theorem~\ref{thm:classification-symmetric}). 
	 
	 \end{enumerate} 
\end{remark}
This article is organized as follows. In Section~\ref{sec:preliminaries}, we collect a few preliminary results regarding the representations of $S_n$, $A_n$ and their double covers. The proofs of all main results are included in Section~\ref{section:proofs}. At last, in Section~\ref{section:further-questions} we collect a few questions arising naturally from this work.

\section{Preliminaries}
\label{sec:preliminaries}
In this section, we collect the notations and basic results regarding character values  for $S_n$, $A_n$, and their double covers (see Olsson~ \cite{MR1264418}, Prasad~\cite{MR3287258}, Stanley~\cite{MR1676282},  for more details). In particular,  we recall Murnaghan--Nakayama rule for $S_n$ (Theorem~\ref{thm:MNrule}) and Morris recursion formula for $\widetilde{S}_n$ (Theorem~\ref{thm:M-N-double}). 
\subsection{Symmetric group representation}
Any element (permutation) in $S_n$ can be written as a product of disjoint cycles. The partition determined by such a product is called the cycle type of the corresponding permutation. Two permutations with the same cycle type are in the same conjugacy class. There is a bijective correspondence between the conjugacy classes of $S_n$ and the set  $\Lambda(n)$ of all partitions of $n$. Given a partition $\alpha=(\alpha_1, \ldots ,\alpha_l)$ of $n$, let

\[
w_\alpha =  (1,2, \ldots, \alpha_1)(\alpha_1 +1,\ldots, \alpha_1+\alpha_2) \cdots (\alpha_1  + \cdots + \alpha_{l-1}+1, \ldots, \alpha_1   + \cdots  + \alpha_l )
\]
denote a representative of the conjugacy class $C_{\alpha}$ indexed by $\alpha$. We shall write $type(w)$, the type of an element $w$, to mean the corresponding cycle type of $w$.

  The set $\Lambda(n)$ also indexes the set of all irreducible representations of $S_n$. For $\lambda \in \Lambda(n)$, let $V_{\lambda}$ (respectively, $\chi_{\lambda}$) denote the corresponding irreducible representation (respectively, irreducible character) of $S_n$. The dimension of $V_{\lambda}$ is given by the Hook length formula
\[\chi_{\lambda}(1)=\frac{n!}{H_{\lambda}} \,  \text{ where } H_{\lambda}=\prod_{(i,j)\in \lambda}h_{ij}, 
\]
where the product runs over all cells $(i,j)$ of $Y(\lambda)$, the Young diagram of $\lambda$ and $h_{ij}$ is the hook length of the cell $(i,j)$, the number
of boxes in $Y(\lambda)$ that lie directly below or directly to its right plus one. 

Let $Rim_{ij}$, known as \emph{rimhook} at $(i,j)$, denote the set of boxes of $Y(\lambda)$ in positions $(k,l)$ such that $k\geq i, l\geq j$ and the Young diagram does not have a box in position $(k + 1, l + 1)$. Note that $h_{ij}=|Rim_{ij}|$ and once we remove a rimhook $Rim_{ij}$ from $Y(\lambda)$, the resultant diagram $Y(\lambda) \setminus Rim_{ij}$ still gives a valid partition of size $n-h_{ij}$. For a given integer $m \in \mathbb N$,  the $m$-weight of $\lambda$, denoted by $wt_m(\lambda)$, is the maximal number of rimhooks of size $m$ that can successively be removed from $\lambda$ and is equal to the number of hooks in $\lambda $ of length $m$.

For example,
for $\lambda=(3,3,2)$, $h_{12}=4$ and the corresponding rimhook $Rim_{12}$ consist of 4 boxes marked with `X' in the first diagram. In the second   
diagram, named \emph{hook diagram}, we put corresponding hook numbers of the cells. Here $H_{\lambda}=5\cdot4\cdot 2\cdot 4 \cdot 3 \cdot 1 \cdot 2 \cdot 1$. Moreover, $wt_2(\lambda)=2$ as there are two hooks of length 2 in $H(\lambda)$.
\begin{displaymath}
\ytableausetup{centertableaux}
\ytableaushort
{\none \bullet X,\none X X,\none X}
* {3,3,2} 
\hspace{2cm}
\ytableausetup{centertableaux}
\ytableaushort
{542,431,21}
\end{displaymath}
We shall use $\chi_{\lambda}(\alpha)$ to denote the value of the character $\chi_{\lambda}$ at the conjugacy class of $w_{\alpha}$.
\begin{theorem}[Murnaghan--Nakayama rule]
	\label{thm:MNrule}
Given a permutation of cycle type  $\alpha=(\alpha_1,\ldots,\alpha_l)$ and a partition $\lambda$ of $n$, we have
\begin{eqnarray*}
\chi_{\lambda}(\alpha) = \sum_{\nu} (-1)^{ht(\nu)-1} \chi_{\lambda \setminus \nu}( \alpha \setminus \alpha_i ),
\end{eqnarray*}
where the sum runs over all rimhooks $\nu$ of length $\alpha_i$ and $ht(\nu)$ is the height of $\nu$ (equals to the number of rows in the skew-diagram of $\nu$), and $\alpha \setminus \alpha_i=(\alpha_1,\ldots,\alpha_{i-1},\alpha_{i+1},\ldots, \alpha_l)$.
\end{theorem}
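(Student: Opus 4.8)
The plan is to deduce the rule from the theory of symmetric functions via the Frobenius characteristic map, reducing the statement to a multiplication rule for power sums against Schur functions. First I would recall the fundamental identity $p_\alpha = \sum_{\lambda \vdash n}\chi_\lambda(\alpha)\, s_\lambda$ in the ring of symmetric functions $\Lambda$, where $p_\alpha = \prod_i p_{\alpha_i}$ is the power-sum symmetric function and $s_\lambda$ the Schur function; this is exactly the statement that the characteristic map sends $\chi_\lambda$ to $s_\lambda$ and expresses $p_\alpha$ in the Schur basis with irreducible character values as coefficients. Since the Schur functions form an orthonormal basis for the Hall inner product, one has $\chi_\lambda(\alpha) = \langle p_\alpha, s_\lambda\rangle$, so the entire statement becomes an identity about how a single power sum $p_{\alpha_i}$ interacts with the Schur basis.

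Next, fixing a single part $r = \alpha_i$ and writing $\beta = \alpha \setminus \alpha_i$, I would use multiplicativity $p_\alpha = p_r\, p_\beta$ together with the inductive hypothesis $p_\beta = \sum_{\mu \vdash n-r}\chi_\mu(\beta)\, s_\mu$, applied on the smaller symbol set of size $n-r$. Substituting gives $p_\alpha = \sum_\mu \chi_\mu(\beta)\,(p_r\, s_\mu)$, so everything hinges on expanding the product $p_r\, s_\mu$ in the Schur basis.

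The crux, and the step I expect to be the main obstacle, is therefore the border-strip Pieri rule
\[
p_r\, s_\mu \;=\; \sum_{\nu}\,(-1)^{\,ht(\nu)-1}\, s_{\mu\cup\nu},
\]
where $\nu$ ranges over all rimhooks (border strips) of size $r$ that can be adjoined to $\mu$ so that $\mu\cup\nu$ is again a partition. Equivalently, since multiplication by $p_r$ and the skewing operator $p_r^{\perp}$ are adjoint under the Hall inner product, this is the assertion that $p_r^{\perp}$ removes border strips of size $r$ with the sign $(-1)^{ht-1}$. I would prove this identity through the Jacobi–Trudi determinant $s_\mu = \det(h_{\mu_i - i + j})$ combined with the Newton-type generating-function relation between the power sums and the complete homogeneous functions $h_m$, then track how multiplication by $p_r$ produces the telescoping cancellations that leave precisely the single-rimhook terms carrying the asserted height sign; alternatively this can be carried out via the Lindström–Gessel–Viennot lattice-path model for the Jacobi–Trudi determinant. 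Verifying that every non-border-strip contribution cancels and that the surviving sign is exactly $(-1)^{ht(\nu)-1}$ is the genuinely technical part.

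Finally, I would substitute the Pieri expansion into $p_\alpha = \sum_\mu \chi_\mu(\beta)\,(p_r s_\mu)$ and compare the coefficient of $s_\lambda$ against $p_\alpha = \sum_\lambda \chi_\lambda(\alpha)\, s_\lambda$. A shape $\lambda$ receives a contribution from exactly those $\mu$ with $\lambda = \mu\cup\nu$ for a size-$r$ rimhook $\nu$, i.e.\ from $\mu = \lambda\setminus\nu$ with $\nu = \lambda/\mu$ a rimhook of length $\alpha_i$; reading off this coefficient yields $\chi_\lambda(\alpha) = \sum_{\nu}(-1)^{ht(\nu)-1}\chi_{\lambda\setminus\nu}(\beta)$, which is exactly the claimed formula. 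The base case $n = r$ follows from the hook expansion $p_r = \sum_{s=0}^{r-1}(-1)^s\, s_{(r-s,1^s)}$, which records the values of $\chi_{(r-s,1^s)}$ on a full $r$-cycle and also seeds the induction.
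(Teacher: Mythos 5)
Your proposal is correct, but there is nothing in the paper to compare it against: Theorem~\ref{thm:MNrule} is stated in the Preliminaries as a recalled classical fact with no proof given --- the authors cite standard references (Olsson, Prasad, Stanley) and use the rule as a black box throughout Section~3. What you write is the standard symmetric-function proof, essentially the one in Stanley's \emph{Enumerative Combinatorics~II} (Theorem~7.17.3), which the paper itself points to: the fundamental identity $p_\alpha=\sum_{\lambda\vdash n}\chi_\lambda(\alpha)\,s_\lambda$, multiplicativity $p_\alpha=p_{\alpha_i}\,p_\beta$ with $\beta=\alpha\setminus\alpha_i$, the border-strip expansion $p_r\,s_\mu=\sum_\nu(-1)^{ht(\nu)-1}s_{\mu\cup\nu}$, and coefficient extraction in the Schur basis, which is legitimate since the $s_\lambda$ are a basis. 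Two small remarks. First, what you call the ``inductive hypothesis'' is not really inductive: the identity $p_\beta=\sum_{\mu\vdash n-r}\chi_\mu(\beta)\,s_\mu$ is the same fundamental identity in a lower degree, valid for every degree at once, so no induction on the number of parts of $\alpha$ is needed --- you only invoke the identity in degrees $n$ and $n-r$ and the Pieri rule once; your framing is harmless but the word ``induction'' overstates the structure. Second, you correctly isolate the genuine technical content in the border-strip Pieri rule, and the Jacobi--Trudi/telescoping route you sketch does work, but the shortest standard argument is via the bialternant formula $s_\mu=a_{\mu+\delta}/a_\delta$: multiplying the alternant $a_{\mu+\delta}$ by $p_r=\sum_k x_k^r$ adds $r$ to one exponent at a time, alternants with a repeated exponent vanish, and re-sorting the surviving exponent sequences produces precisely the sign $(-1)^{ht(\nu)-1}$ attached to the rimhook $\nu$; this avoids the cancellation bookkeeping you flag as the main obstacle. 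Your base case $p_r=\sum_{s=0}^{r-1}(-1)^s s_{(r-s,1^s)}$, recording $\chi_{(r-s,1^s)}((r))=(-1)^s$ and the vanishing of $\chi_\lambda((r))$ on non-hooks, is also correct and consistent with the recursion with $\chi_\emptyset=1$.
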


\begin{example}
	Let $\lambda=(3,3)$ and $\alpha=(4,2)$. The hook diagram and possible 2-rimhooks are given below.
	\begin{displaymath}
	\ytableausetup{centertableaux}
	\ytableaushort
	{432,321}
	\hspace{2cm}
	\ytableausetup{centertableaux}
	\ytableaushort
	{\none \none X,\none \none X}
	* {3,3} 
	\hspace{2cm}
	\ytableaushort
	{\none \none \none, \none X  X}
	* {3,3} 
	\end{displaymath}
	
	Then Murnaghan--Nakayama rule can be used to determine $\chi_{(3,3)}((4,2))$ as follows.  $$\chi_{(3,3)}((4,2))=(-1)^{(2-1)} \chi_{(2,2)}((4)) + (-1)^{(1-1)} \chi_{(3,1)}((4)),$$
	as the first rim-hook has height $2$ and the other one has height $1$. From the hook diagram of $(2,2)$ below, we get $\chi_{(2,2)}((4))=0$ as $wt_4((2,2)) = 0$.
	\begin{displaymath}
	\ytableausetup{centertableaux}
	\ytableaushort
	{32,21}
	\hspace{2cm}
	\ytableausetup{centertableaux}
	\ytableaushort
	{421,1}
	\hspace{2cm}
	\ytableausetup{centertableaux}
	\ytableaushort
	{X X X,X}
	* {3,1} 
	\end{displaymath}
	Since $wt_4((3,1)) = 1$, we get $\chi_{(3,1)}((4))=\chi_{()}(\empty)=1$. Hence $\chi_{(3,3)}((4,2))=1$.
\end{example}
The next corollary is an immediate consequences of the Murnaghan--Nakayama rule.
\begin{corollary}
\label{MN12}
\noindent {\bf (MN1):} For  two partitions $\lambda$ and $\alpha=(\alpha_1,\ldots,\alpha_l)$ of $n$, $\chi_{\lambda}(\alpha)=0$ if $wt_{\alpha_i}(\lambda) = 0$ for some $i \in \{1, \ldots, l\}$.

\noindent {\bf (MN2):} For $\alpha=(\alpha_1,\alpha_2) \vdash n$, we have
	$\chi_{\lambda}(\alpha)=0$ if $wt_{\alpha_1}(\lambda)  = 1$ and $wt_{\alpha_2} (\lambda \setminus \nu) = 0$, where $\nu$ is the unique rim hook of $\lambda$ of length $\alpha_1$.
\end{corollary}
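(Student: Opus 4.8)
The plan is to derive both statements directly from the recursive form of the Murnaghan--Nakayama rule (Theorem~\ref{thm:MNrule}), using the fact recorded in the definition of the $m$-weight that $wt_m(\lambda)$ counts exactly the rimhooks of length $m$ removable from $\lambda$; in particular $wt_m(\lambda)=0$ precisely when $Y(\lambda)$ contains no rimhook of length $m$ (equivalently, no cell of hook length $m$).

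For (MN1), I would exploit that the index $i$ in Theorem~\ref{thm:MNrule} is free, so one is entitled to strip off first the part $\alpha_i$ for which $wt_{\alpha_i}(\lambda)=0$. Writing out the rule,
\[
\chi_{\lambda}(\alpha) = \sum_{\nu} (-1)^{ht(\nu)-1}\, \chi_{\lambda \setminus \nu}(\alpha \setminus \alpha_i),
\]
the sum ranges over all rimhooks $\nu$ of $\lambda$ of length $\alpha_i$. Since $wt_{\alpha_i}(\lambda)=0$ there are no such rimhooks, the sum is empty, and hence $\chi_{\lambda}(\alpha)=0$.

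For (MN2), I would first apply the rule by removing $\alpha_1$. Because $wt_{\alpha_1}(\lambda)=1$, there is a unique rimhook $\nu$ of length $\alpha_1$, so the sum collapses to a single term,
\[
\chi_{\lambda}((\alpha_1,\alpha_2)) = (-1)^{ht(\nu)-1}\, \chi_{\lambda \setminus \nu}((\alpha_2)).
\]
Here $\lambda \setminus \nu$ is a partition of $\alpha_2$, and by hypothesis $wt_{\alpha_2}(\lambda \setminus \nu)=0$. Applying (MN1) to the partition $\lambda \setminus \nu$ with the single part $\alpha_2$ forces $\chi_{\lambda \setminus \nu}((\alpha_2))=0$, whence $\chi_{\lambda}((\alpha_1,\alpha_2))=0$.

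There is essentially no serious obstacle here, as the corollary is immediate once the recursion is unwound; the only points requiring care are the identification of $wt_m(\lambda)=0$ with the nonexistence of a removable rimhook of length $m$, which is built into the definition of the $m$-weight, and the legitimacy in (MN1) of choosing which part $\alpha_i$ to peel off first, which is justified by the fact that the left-hand character value is independent of that choice.
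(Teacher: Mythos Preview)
Your argument is correct and is exactly what the paper intends: it states the corollary without proof, noting only that it is an immediate consequence of the Murnaghan--Nakayama rule. Your unwinding of the recursion---first stripping the part with zero weight for (MN1), then the unique rimhook followed by (MN1) for (MN2)---is precisely that immediate deduction.
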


\subsection{Alternating group representations}
The Alternating group $A_n$ is the subgroup of $S_n$ consisting of even permutations. A partition $\alpha \vdash n$ is called  even (respectively, odd) if $w_\alpha \in A_n$ (respectively, $w_\alpha \in S_n\setminus A_n$). The next result gives information regarding the conjugacy classes of $A_n$. 

\begin{theorem}{\cite[Theorem~4.6.13]{MR3287258}} For each even partition $\lambda$ of $n$ with distinct odd parts, the set of permutations
with cycle type $\lambda$ splits into two conjugacy classes in $A_n$, $C_\lambda^+$ and $C_\lambda^-$
of equal cardinality. In fact, for any odd permutation $x \in S_n$,
\[
C_\lambda^- = x C_\lambda^+ x^{-1}.
\]
For any other even partition $\lambda$, the set of permutations with cycle type $\lambda$ is a conjugacy class in $A_n$. 
\end{theorem}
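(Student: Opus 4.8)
The plan is to treat this as an instance of the general behaviour of conjugacy classes under passage to an index-two normal subgroup, and then to reduce the resulting criterion to an explicit sign computation for centralizers of permutations. Throughout I would write $G = S_n$, $N = A_n$, and fix the representative $w = w_\lambda$ of cycle type $\lambda$; since $\lambda$ is assumed even, $w \in A_n$.

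First I would establish the standard dichotomy: for $g \in N$ with $[G:N] = 2$, the $G$-conjugacy class of $g$ remains a single $N$-class when $C_G(g) \not\subseteq N$, and splits into two $N$-classes of equal size when $C_G(g) \subseteq N$. The cleanest route is the orbit-counting argument. Identifying the $G$-class with the transitive $G$-set $G/C_G(g)$, the $N$-classes it contains correspond to the $N$-orbits, i.e. to the double cosets $N \backslash G / C_G(g)$; as $N$ is normal these biject with cosets of $N\, C_G(g)$ in $G$, so their number is $[G : N\, C_G(g)]$. Since $[G:N]=2$, this index is $1$ exactly when $N\,C_G(g)=G$ (equivalently $C_G(g)\not\subseteq N$) and is $2$ exactly when $C_G(g)\subseteq N$. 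In the split case, choosing any $x \in G \setminus N$, conjugation by $x$ is an automorphism of $G$ stabilising $N$ and permuting its classes; it cannot fix the two $N$-pieces separately (otherwise the double-coset count would be $1$), so it interchanges them, giving $C_\lambda^- = x C_\lambda^+ x^{-1}$, and the orbit--stabiliser theorem forces the two to have equal cardinality. This already delivers the ``in fact'' clause and the equal-size assertion.

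It then remains to decide exactly when $C_{S_n}(w) \subseteq A_n$. Here I would invoke the classical description of the centralizer of a permutation: if $m_i$ is the multiplicity of the part $i$ in $\lambda$, then $C_{S_n}(w) \cong \prod_i \left( \Z/i\Z \wr S_{m_i} \right)$, where each cyclic factor is generated by a single $i$-cycle of $w$ and the factor $S_{m_i}$ permutes the $i$-cycles of equal length among themselves. Thus $C_{S_n}(w) \subseteq A_n$ is equivalent to every generator being an even permutation, and the whole problem collapses to a parity check on two families of generators: a single $i$-cycle, of sign $(-1)^{i-1}$, and the transposition of two length-$i$ blocks, which is a product of $i$ disjoint transpositions and hence has sign $(-1)^i$.

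The heart of the argument, and the step most in need of care, is the sign bookkeeping that turns these two parities into the combinatorial condition on $\lambda$. An $i$-cycle sitting in the centralizer is odd precisely when $i$ is even, so $C_{S_n}(w)\subseteq A_n$ forces every part of $\lambda$ to be odd; a block transposition, present as soon as some part has multiplicity $m_i\ge 2$, is odd precisely when $i$ is odd, so once all parts are odd this further forces each $m_i \le 1$, i.e. the parts are \emph{distinct}. Conversely, if $\lambda$ has distinct odd parts there are neither even cycles nor repeated blocks, every generator is even, and $C_{S_n}(w)\subseteq A_n$. Combining this with the first-step dichotomy yields exactly the claimed splitting, and the observation that a product of odd-length cycles is automatically even reconfirms that such $\lambda$ are even partitions. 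I expect the only genuine subtlety to be phrasing the wreath-product centralizer and the two sign formulas precisely enough that the ``odd'' and ``distinct'' conditions emerge cleanly and simultaneously.
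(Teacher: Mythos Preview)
Your argument is correct and is the standard proof of this classical fact. Note, however, that the paper does not supply its own proof of this statement: it is quoted as a preliminary result with citation to \cite[Theorem~4.6.13]{MR3287258} and is used without further justification. There is therefore no in-paper proof to compare against; your approach via the index-two splitting criterion and the explicit wreath-product description of $C_{S_n}(w)$ is precisely the standard one found in the cited reference and elsewhere.
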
 

The irreducible representations of $A_n$ are also characterized by $\lambda \vdash n$. For $\lambda = \lambda'$, the conjugate of $\lambda,$ there are two irreducible representations $V_{\lambda}^+$  and $V_{\lambda}^-$ of $A_n$ such that ${V_\lambda}|_{A_n}=V_{\lambda}^+ \oplus V_{\lambda}^-$. Let $\chi_\lambda^{\pm}$ be the corresponding irreducible characters. For each $\lambda \neq \lambda'$, the restriction $V_\lambda |_{A_n}$ is an irreducible representation of $A_n$. Let $\chi_\lambda^{\downarrow}$ be the corresponding irreducible character in this case, see \cite[Theorem~4.6.7]{MR3287258} for more details. In general, we shall use $\chi_{\lambda}^*$ with $* \in \{\pm, \downarrow   \}$ to denote these irreducible characters of $A_n$.

\begin{definition} (Folding)
	Given a partition with all distinct parts of odd lengths, we can fold (uniquely) each part to make a self-conjugate hook and a self-conjugate partition. Denote the folding operation by $\mathcal{F}$. 
\end{definition}	
	
	For example, folding the first part of length $7$ of  $\lambda=(7,3)$ we get the self-conjugate hook $(4,1,1,1)$ and by folding the other part $3$ we get $(2,1)$. Hence we obtain a self-conjugate partition $\mu=(4,3,2,1)$. 
	\begin{displaymath}
	\ytableausetup{centertableaux}
	\ytableaushort
	{1111111,222}	
	\quad
    \overset{\mathcal{F}}{\longmapsto}
    \quad
	\ytableausetup{centertableaux}
	\ytableaushort
	{1111,122,12,1}.
	\end{displaymath}
	We observe that the partition obtained by folding is unique, and the process is reversible. Hence we obtain the following result:
	
\begin{lemma}
	For $\lambda, \mu \vdash n$ such that $\lambda = \lambda'$. Then $\mathcal{F}(\mu ) = \lambda$ implies $\mu$ has distinct odd parts. 
\end{lemma}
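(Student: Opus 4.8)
The plan is to establish that $\mathcal{F}$ restricts to a bijection between partitions of $n$ into distinct odd parts and self-conjugate partitions of $n$; the lemma then follows immediately, since $\mathcal{F}(\mu)=\lambda$ with $\lambda=\lambda'$ forces $\mu$ to be the unique preimage, whose parts we will show are distinct and odd. The main tool is the principal (diagonal) hook decomposition of a Young diagram, together with the reversibility of folding already advertised in the text.

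First I would recall that for a self-conjugate $\lambda$ whose Durfee square (the largest $d\times d$ square contained in $Y(\lambda)$) has size $d$, the principal hook $Rim_{ii}$ based at the diagonal cell $(i,i)$ has arm length $\lambda_i-i$ and leg length $\lambda_i'-i$. Since $\lambda=\lambda'$ these are equal, so $Rim_{ii}$ is itself a self-conjugate hook of odd length $h_{ii}=2(\lambda_i-i)+1$ for $1\le i\le d$. Because $Y(\lambda)$ is the disjoint union of the nested hooks $Rim_{11}\supset Rim_{22}\supset\cdots\supset Rim_{dd}$, their lengths sum to $n$.

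Next I would verify that these lengths are strictly decreasing: the inequality $h_{ii}>h_{i+1,i+1}$ is equivalent to $\lambda_i-i>\lambda_{i+1}-(i+1)$, i.e. $\lambda_i\ge\lambda_{i+1}$, which always holds. Hence $(h_{11},h_{22},\ldots,h_{dd})$ is a partition of $n$ into distinct odd parts, and this gives the inverse of folding: \emph{unfolding} $\lambda$ means extracting its principal hooks and straightening each self-conjugate hook $Rim_{ii}$ into a single row of odd length $h_{ii}$.

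Finally I would check that folding and unfolding are mutually inverse. Given $\mu=(\mu_1>\cdots>\mu_d)$ with distinct odd parts, folding turns each $\mu_i=2k_i+1$ into the self-conjugate hook $(k_i+1,1^{k_i})$ and nests them with $\mu_1$ outermost; since $\mu_1>\cdots>\mu_d$ forces $k_1>\cdots>k_d$, the arm lengths strictly decrease, so the inner hooks fit and the assembled diagram is a genuine self-conjugate partition whose principal hook at $(i,i)$ is exactly the $i$-th folded hook. Thus unfolding returns $\mu$, and conversely the computation above shows unfolding any self-conjugate $\lambda$ produces distinct odd parts that refold to $\lambda$. This makes $\mathcal{F}$ a bijection with the claimed domain and image, from which the lemma is read off. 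The only delicate point is confirming that the nested hooks assemble into a valid Young diagram and that their lengths recover the original parts; both reduce to the inequality $\lambda_i\ge\lambda_{i+1}$ and its converse for strictly decreasing $k_i$, so no serious obstacle arises.
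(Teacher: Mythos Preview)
Your argument is correct and follows essentially the same path as the paper: both rest on the observation that folding is reversible, with the inverse given by reading off the principal (diagonal) hooks of a self-conjugate diagram. The paper merely asserts uniqueness and reversibility in one sentence before stating the lemma, whereas you spell out the hook decomposition and verify that the diagonal hook lengths $h_{ii}=2(\lambda_i-i)+1$ are odd, sum to $n$, and are strictly decreasing.

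One small notational slip worth fixing: in this paper $Rim_{ij}$ denotes the \emph{rimhook} at $(i,j)$, not the hook itself. The objects that disjointly partition $Y(\lambda)$ are the principal \emph{hooks} (the L-shapes with corner at $(i,i)$), not the rimhooks, and these are pairwise disjoint rather than nested as sets. Also, your equivalence ``$h_{ii}>h_{i+1,i+1}$ iff $\lambda_i\ge\lambda_{i+1}$'' is slightly off as stated; in fact $h_{ii}-h_{i+1,i+1}=2(\lambda_i-\lambda_{i+1}+1)\ge 2$, so strict inequality is automatic. With these cosmetic corrections your proof is complete and more detailed than what the paper provides.
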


The following result relates the character values of $A_n$ with that of $S_n$ and is used repeatedly in Section~\ref{section:proofs}. 

\begin{lem} 
	\label{lemma:deduction-ofsymmetric-nonzero}
	
	For each $w \in A_n$, we have the following:
	\[ 
	\chi_\lambda^{*}(w) \neq 0\,\,\, \mathrm{if \,\,and \,\, only\,\, if} \,\,\,  \chi_\lambda(w) \neq 0. 
	\]	
\end{lem}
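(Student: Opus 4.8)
The plan is to compare the values of $\chi_\lambda^*$ and $\chi_\lambda$ on the element $w \in A_n$ by using the relationship between the restriction of $S_n$-characters to $A_n$ and the structure of $S_n$-conjugacy classes. The two cases $* = \downarrow$ and $* = \pm$ behave differently, so I would split into these.

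First, for the non self-conjugate case $\lambda \neq \lambda'$, the character $\chi_\lambda^\downarrow$ is simply the restriction $\chi_\lambda|_{A_n}$, so $\chi_\lambda^\downarrow(w) = \chi_\lambda(w)$ for every $w \in A_n$, and the claimed equivalence is immediate. This disposes of the $\downarrow$ case entirely. For the self-conjugate case $\lambda = \lambda'$, I would use the decomposition $\chi_\lambda|_{A_n} = \chi_\lambda^+ + \chi_\lambda^-$, which gives
\[
\chi_\lambda^+(w) + \chi_\lambda^-(w) = \chi_\lambda(w).
\]
The standard theory (Clifford theory for the index-two subgroup $A_n \leq S_n$) says that $\chi_\lambda^+$ and $\chi_\lambda^-$ agree on every conjugacy class of $S_n$ that does \emph{not} split in $A_n$, and differ only on the two halves $C_\mu^\pm$ of a split class. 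On a non-split class the common value is therefore $\tfrac{1}{2}\chi_\lambda(w)$, so $\chi_\lambda^\pm(w)\neq 0$ exactly when $\chi_\lambda(w)\neq 0$, and the equivalence holds there.

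The main work, and the step I expect to be the real obstacle, is the split-class case: when $w$ lies in a class $C_\mu^\pm$ with $\mu$ an even partition into distinct odd parts. Here $\chi_\lambda^+(w)$ and $\chi_\lambda^-(w)$ can be unequal, and I must rule out the possibility that $\chi_\lambda(w) = 0$ while $\chi_\lambda^\pm(w)\neq 0$ (or vice versa). The key structural fact is that, by the classical description of the split values, a self-conjugate $\lambda$ can have $\chi_\lambda^+$ differ from $\chi_\lambda^-$ on the split class $C_\mu^\pm$ only when $\mu$ is precisely the partition obtained by folding, i.e. $\mathcal{F}(\mu)=\lambda$ in the notation above; for all other split classes $\chi_\lambda^+=\chi_\lambda^-$ and the previous argument applies. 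On that distinguished folding class the values $\chi_\lambda^\pm(w)$ are given explicitly (they are of the form $\tfrac{1}{2}\bigl(\chi_\lambda(w)\pm\sqrt{\pm\prod_i \mu_i}\bigr)$), and in particular the ``irrational part'' $\sqrt{\pm\prod_i\mu_i}$ is nonzero since the $\mu_i$ are positive. I would then observe that at this class $\chi_\lambda(w) = \pm H_\lambda^{1/2}$-type data forces $\chi_\lambda(w) \neq 0$ as well, so both $\chi_\lambda^\pm(w)$ and $\chi_\lambda(w)$ are nonzero simultaneously.

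Concretely, the cleanest route is to note that the only class on which $\chi_\lambda^+ \neq \chi_\lambda^-$ is the folding class of $\lambda$, and on that class one checks directly from the explicit formula that neither $\chi_\lambda^\pm(w)$ nor $\chi_\lambda(w)$ vanishes; on every other $p$-regular class of $A_n$ one has $\chi_\lambda^\pm(w) = \tfrac12\chi_\lambda(w)$ (self-conjugate case) or $\chi_\lambda^\downarrow(w)=\chi_\lambda(w)$ (otherwise), from which the equivalence is transparent. Thus in all cases $\chi_\lambda^*(w)\neq 0 \iff \chi_\lambda(w)\neq 0$, completing the proof.
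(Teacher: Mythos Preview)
Your approach is essentially identical to the paper's: both dispose of $\lambda\neq\lambda'$ trivially, and for $\lambda=\lambda'$ separate the unique folding class $\mathcal{F}(\mu)=\lambda$ from all other classes, on which $\chi_\lambda^\pm(w)=\tfrac12\chi_\lambda(w)$. The one place you are vague is the folding-class verification (your ``$\pm H_\lambda^{1/2}$-type data'' remark is not an argument); the paper handles it cleanly by adding $\chi_\lambda^+(w)+\chi_\lambda^-(w)$ in the explicit formula to get $\chi_\lambda(w)=\epsilon_\mu=\pm1$, after which both $\chi_\lambda(w)$ and $\chi_\lambda^\pm(w)=\tfrac12\bigl(\epsilon_\mu\pm\sqrt{\epsilon_\mu\prod_i\mu_i}\bigr)$ are visibly nonzero once $\prod_i\mu_i>1$.
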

\begin{proof}
	For $\lambda \neq \lambda'$, the result follows by the definition of $\chi_\lambda^{\downarrow}$.	
So, now onwards we assume $\lambda = \lambda'$. From  \cite[Theorem~5.12.5 ]{MR3287258} the character values of  $\chi_\lambda^{\pm}(w)$ are given as below:
	\[
	\chi_\lambda^{\pm}(w) =  \begin{cases}  \frac{1}{2}\chi_\lambda(w), &\,\, \text{if} \,\,   \mathcal{F}(\text{type}(w)) \neq \lambda, \\  
	\frac{1}{2}(\epsilon_{\text{type}(w)} \pm \sqrt{\frac{\epsilon_{\text{type}(w)} n!}{|C_{\text{type}(w)}| }}), & \,\, \text{if} \,\,   \mathcal{F}(\text{type}(w)) = \lambda \,\, \text{and}\,\, w \in C_{\text{type}(w)}^+ .\\
	\end{cases}
	\]
	
where, for $\mu = (2m_1+1, 2m_2+1, ..)$ with all distinct parts, 
$\epsilon_\mu = (-1)^{\sum m_i}.$ We also have 
	\[
	\chi_\lambda^\pm(vwv^{-1}) = \chi_\lambda^\mp (w) 
	\]
	for any odd permutation $v$ (see \cite[(4.16)]{MR3287258} ).
	Combining above results we obtain:
	\[
	\chi_\lambda(w) =  \begin{cases} 2 \chi_\lambda^{\pm}(w), &\,\, \text{if} \,\,   \mathcal{F}(\text{type}(w)) \neq \lambda, \\  
	\epsilon_{\text{type}(w)}, & \,\, \text{otherwise}.
	\end{cases}
	\]
	The lemma follows from the above expression along with the observation that $\chi_\lambda(w)$ and $\chi_\lambda^{\pm}(w)$ are both non-zero for  $\mathcal{F}(\text{type}(w)) = \lambda$. 
	\end{proof}

\subsection{Double Cover of the Symmetric and Alternating groups}
\label{double cover}
Schur studied the double cover of the symmetric group and its representations. His primary interest in this question was motivated by the study of projective representations of the symmetric groups. Schur proved that for $S_n$, there exists a group $\widetilde{S}_n$ such that the following is a central extension
\[
1 \rightarrow \mathbb Z/2\mathbb Z \rightarrow \widetilde{S}_n \xrightarrow{\pi} S_n
\]
and every projective representation of $S_n$ lifts to an ordinary representation of $\widetilde{S}_n$. Any group with the above properties is called either Schur cover or representation group of $S_n$. Schur classified these groups up to isomorphism and proved that up to isomorphism, two representations groups of $S_n$ for $n \geq 4$. It is well known that there exists a dimension preserving bijection between the irreducible representations of both of these groups. We refer the reader to Morris~\cite{MR136668} and Stembridge~\cite{MR991411} for details regarding the representation groups of $S_n$ and their irreducible representations. Throughout this section, we fix the following double cover of $S_n$. 
\[
\widetilde{S}_n = \langle  z, t_1, t_2, \ldots, t_{n-1}, \mid z^2 = 1,  t_j^2 = z, \,\, (t_j t_{k})^2 = z \,\, \text{for}\,\, |j-k| \geq 2, \,\, (t_j t_{j+1})^3 = z   \rangle.
\]  
 
Let $\widetilde{A}_n$ be the preimage of $A_n$ in $\widetilde{S}_n$. The group $\widetilde{A}_n$ is a double cover of $A_n$ for all $n$. This is uniquely determined upto isomorphism. For $n \neq 6,7$, this group is further known to be the representation group of $A_n$.

For $G = S_n$ or $A_n$, let $ \widetilde G = \widetilde{S}_n$ or $\widetilde{A}_n$ respectively. The group $ \widetilde G$ projects onto $G$ via $pi$, therefore any irreducible representation of $G$ naturally gives an irreducible of $ \widetilde G$, we call these as $G$-representation of $ \widetilde G$. Any representation of $ \widetilde G$ that is not obtained from $G$ via projection map  $\pi$ is called a spin representation of $ \widetilde G$. 
\begin{remark}
	\label{remark:non-spin-characters}
Let $\rho$ be a representation of $G$ and $\widetilde{\rho}$ be the corresponding $G$-representation of $\ \widetilde G$, then
\[
\chi_{\widetilde{\rho}}(g) = \chi_\rho(\pi(g)),\,\,\text{for all}\,\, g\in  \widetilde G.
\]
This implies that $\chi_{\widetilde{\rho}}(g) \neq 0 $ if and only if $\chi_{\rho}(\pi(g)) \neq 0 $. We note that for any prime $p$, $g $ is $p$-regular implies $\pi(g)$ is $p$-regular. Conversely, if $h \in G$ is $p$-regular then there exists $g' \in  \widetilde G$ such that $g'$ is $p$-regular and $\pi(g') = h $ (see also Remark~\ref{remark:2-regular-preimage}). Therefore $\widetilde{\rho}$  is quasi $p$-Steinberg if and only if $\rho$ is quasi $p$-Steinberg. Therefore for $G$-representations of $ \widetilde G$, we obtain quasi $p$-Steinberg representations of $ \widetilde G$ from Theorem~\ref{thm:classification-symmetric}.  
For this reason, we shall focus only on the spin characters of $ \widetilde G$.  
\end{remark}

Now we proceed to describe the irreducible spin representations of $\widetilde{S}_n$ and $\widetilde{A}_n$. All result mentioned in this section are from Hoffman--Humphreys~\cite{MR1205350} and Stembridge~\cite{MR991411}. Also see Bessenrodt~\cite{MR3856528} and references therein. 
Let $D(n)$ (respectively, $O(n)$) denote the set of partitions of $n$ with all parts distinct (respectively, odd). Let $D^{+}(n)$ (respectively, $D^{-}(n)$) consists of partitions in $D(n)$ with an even (respectively, odd) number of even parts.  Any $\lambda \in O(n) \cap D(n)$ satisfies $\lambda \in D^+(n)$. The partition $\lambda = \emptyset $ is assumed to be in $D^{+}(n)$.

 Given a partition $\alpha \vdash n$, let $\widetilde{C}_\alpha$ be the pre-image under $\pi$ of the $S_n$-conjugacy class $C_{\alpha}$. The following theorem gives the conjugacy classes of $\widetilde{S}_n$ due to Schur. 
 
\begin{theorem}
Given $\alpha \in \Lambda(n)$, the set  $\widetilde{C}_\alpha$ splits into two $\widetilde{S}_n$ conjugacy classes if and only if $\alpha\in O(n)\cap D^{-}(n)$, otherwise $\widetilde{C}_\alpha$ does not split.
\end{theorem}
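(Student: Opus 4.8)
The plan is to analyze the splitting of $\widetilde{C}_\alpha$ by a standard Clifford-theoretic argument, counting how the preimage of a single $S_n$-conjugacy class breaks up under conjugation in $\widetilde{S}_n$. Recall that $\widetilde{C}_\alpha = \pi^{-1}(C_\alpha)$ and that the central kernel $\{1,z\}$ has order two, so $|\widetilde{C}_\alpha| = 2|C_\alpha|$ in all cases. The first step is to fix a lift $\widetilde{w}_\alpha \in \widetilde{S}_n$ of the representative $w_\alpha$ and observe that $z \cdot \widetilde{w}_\alpha$ is the other element of $\widetilde{S}_n$ lying over $w_\alpha$. The key question is whether $\widetilde{w}_\alpha$ and $z\,\widetilde{w}_\alpha$ are conjugate in $\widetilde{S}_n$: if they are, then $\widetilde{C}_\alpha$ is a single class of size $2|C_\alpha|$, and if they are not, then $\widetilde{C}_\alpha$ splits into two classes each of size $|C_\alpha|$.

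The second step is to reduce this to a commutator computation. Since $z$ is central of order two, $\widetilde{w}_\alpha$ is conjugate to $z\,\widetilde{w}_\alpha$ precisely when there exists $\widetilde{g}$ with $\widetilde{g}\,\widetilde{w}_\alpha\,\widetilde{g}^{-1} = z\,\widetilde{w}_\alpha$, i.e.\ when the commutator $[\widetilde{g}, \widetilde{w}_\alpha]$ equals $z$ for some $\widetilde{g}$ projecting to an element of $C_{S_n}(w_\alpha)$. Thus one must understand the commutator pairing $C_{S_n}(w_\alpha) \times C_{S_n}(w_\alpha) \to \{1,z\}$ induced by lifting to $\widetilde{S}_n$; the class splits exactly when this pairing is trivial, equivalently when every lift of every centralizing element commutes with $\widetilde{w}_\alpha$. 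I would make this explicit using the Coxeter-type presentation of $\widetilde{S}_n$ given above, writing $\widetilde{w}_\alpha$ as the standard product of the $t_j$'s corresponding to the cycle structure $\alpha$ and computing the relevant signs from the braid and commutation relations $t_j^2 = z$, $(t_jt_k)^2 = z$ for $|j-k|\geq 2$, and $(t_jt_{j+1})^3 = z$.

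The third step is to match the outcome of this sign computation against the arithmetic conditions $O(n)$ and $D^{-}(n)$. Here the expected dichotomy is combinatorial: the centralizer $C_{S_n}(w_\alpha)$ is generated by powers of the cycles of $w_\alpha$ together with permutations exchanging equal-length cycles, and the commutator sign picks up a factor from each cycle of \emph{even} length (an even part contributes a sign because a transposition-type lift anticommutes) and from each \emph{repeated} part (two equal cycles can be swapped, producing a sign when the common length is even). Requiring the pairing to be trivial forces all parts to be \emph{odd} (so no even-length cycle contributes) and all parts to be \emph{distinct} (so no swap is available), which is the condition $\alpha \in O(n) \cap D(n)$; the further refinement to $D^{-}(n)$ comes from tracking the global sign $\epsilon_\alpha = (-1)^{\sum m_i}$ attached to the lift of a single odd cycle and demanding it be nontrivial, which pins down the odd-number-of-even-parts class.

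I expect the main obstacle to be the commutator sign computation in the second step: carefully lifting the generators of $C_{S_n}(w_\alpha)$ to $\widetilde{S}_n$ and tracking the accumulated central factor $z^{\pm 1}$ through the presentation relations is delicate, especially the contributions of cycle-swapping elements, and one must verify that no cancellation among these signs produces an unexpected triviality. Rather than grinding through the word combinatorics directly, the cleaner route is to invoke Schur's explicit evaluation of the spin character degrees and orthogonality to count the number of split classes, or to cite the known computation of the commutator form on $C_{S_n}(w_\alpha)$; I would present the reduction to the commutator pairing in full and then appeal to this established evaluation to extract the stated condition $\alpha \in O(n) \cap D^{-}(n)$.
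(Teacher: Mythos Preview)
The paper does not prove this theorem at all: it is stated as a classical result of Schur and used without proof (note the attribution ``due to Schur'' preceding the statement). So there is no paper proof to compare against; the relevant comparison is whether your argument stands on its own.

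Your reduction in the first two steps is sound: splitting of $\widetilde{C}_\alpha$ is equivalent to the triviality of the commutator map $C_{S_n}(w_\alpha)\to\{1,z\}$, $g\mapsto[\widetilde g,\widetilde w_\alpha]$. The problem is in the third step, and it stems in part from a typo in the paper: the stated condition $\alpha\in O(n)\cap D^{-}(n)$ is in fact the \emph{empty set}, because $O(n)\cap D(n)\subseteq D^{+}(n)$ (a partition with all parts odd has zero even parts). The correct condition, consistent with Theorem~\ref{thm:symmetric-spin-characters1}(1) later in the paper, is $\alpha\in O(n)\cup D^{-}(n)$. Your paragraph tries to force the argument toward the intersection and so arrives at a conclusion that is internally inconsistent: you claim trivial pairing forces $\alpha\in O(n)\cap D(n)$ and then invoke a ``further refinement to $D^{-}(n)$'', but $O(n)\cap D(n)$ and $D^{-}(n)$ are disjoint.

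More substantively, the heuristic ``each even cycle contributes a sign, each repeated part contributes a sign'' is too coarse. The commutator you need is $[\widetilde c_i,\widetilde w_\alpha]$, not the pairwise commutators $[\widetilde c_i,\widetilde c_j]$. Using $[\widetilde c_i,\widetilde c_j]=z^{(\lambda_i-1)(\lambda_j-1)}$ and multiplicativity into the centre, one finds for $\alpha\in D(n)$ that $[\widetilde c_i,\widetilde w_\alpha]=z^{(\lambda_i-1)(\,\#\{\text{even parts}\}-1)}$ when $\lambda_i$ is even, and is trivial when $\lambda_i$ is odd. Thus for distinct parts the pairing is trivial precisely when there are no even parts (giving $O(n)\cap D(n)$) \emph{or} an odd number of even parts (giving $D^{-}(n)$). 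The analysis for repeated parts then shows that any repeated even part kills splitting while repeated odd parts do not, recovering the full condition $O(n)\cup D^{-}(n)$. Your sketch does not capture this parity-of-the-count phenomenon, and that is the missing idea.
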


Let $\sigma_\alpha \in \widetilde{S}_n$ denotes the preimage of $w_\alpha \in S_n$ obtained by fixing the preimage of $(i, i+1)$ to be $t_i$ for all $1 \leq i \leq n-1$. The other possible preimage of $w_\alpha \in S_n$ is $z \sigma_\alpha$. We note  that $\chi(z \sigma_\alpha)$ is easily obtained from  $\chi(\sigma_\alpha)$ for an irreducible character $\chi$ because $z$ is a central element of order two. 
\begin{remark} 
	\label{remark:2-regular-preimage} 
We note that if $w_\alpha$ is $2$-regular then either $\sigma_\alpha$ or $z \sigma_\alpha$ has order equal to that of $w_\alpha$ and hence is $2$-regular. We will call such a preimage to be a $2$-regular preimage of $w_\alpha$. 
\end{remark} 

  The irreducible spin characters of $\widetilde{S}_n$ are indexed by $\lambda \in D(n)$. For $\lambda \in D^{+}(n)$, there is a self associate spin character $\widetilde{\chi}_\lambda $ and for $\lambda \in D^{-}(n)$ there are pair of associate characters $\widetilde{\chi}_\lambda^{+}$ and $\wchi_\lambda^{-}$.  
  The next theorem, due to Schur, helps to compute character values.

\begin{theorem}
	\label{thm:symmetric-spin-characters1} For $\lambda \in D(n)$, the character values for spin character $\widetilde{\chi}_\lambda$ of $\widetilde{S}_n$ satisfy the following:

	\begin{enumerate} 
		\item $\widetilde{\chi}_\lambda(\sigma_\alpha) = 0$ for all $\alpha \notin D^{-}(n) \cup O(n)$, $\lambda \in D(n)$.
		\item $\widetilde{\chi}_\lambda(\sigma_\alpha) = 0$ for all $\alpha \in D^{+}(n)$ such that $\alpha \neq \lambda$, $\lambda \in D^{+}(n)$.  
		\item $\wchi_\lambda^{ +}(\sigma_\alpha) = \wchi_\lambda^{-}(\sigma_\alpha)$ for all $\alpha \in O(n)$, $\lambda \in D^{-}(n)$. 
		\item  For $\lambda = (\lambda_1, \lambda_2,\cdots, \lambda_{\ell(\lambda)} )\in D^{-}(n)$, we have 
		\[
		\wchi_\lambda^{+} (\sigma_\lambda) = -\wchi_\lambda^{-} (\sigma_\lambda) = i^{(n-\ell(\lambda)+1)/2} \sqrt{\prod_j \lambda_j/2}
		.\]
	\end{enumerate} 
\end{theorem}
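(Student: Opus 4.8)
The common foundation is that $z$ is central of order two and, acting as the scalar $-1$ on every spin module (otherwise the representation would factor through $\pi$), forces $\wchi_\lambda(zg) = -\wchi_\lambda(g)$ for all $g$. Alongside this I would use two structural inputs: explicit conjugacy computations in the Coxeter-type presentation of $\widetilde{S}_n$, and the realization of the spin characters through Schur's $Q$-functions via the spin characteristic map. The conjugacy-class description stated just before the theorem will be used throughout to track when a preimage class does or does not split.

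First I would dispose of (3) and the left-hand equality of (4), which are the cleanest. The two members of an associate pair differ by the sign character of $S_n$ pulled back along $\pi$, so $\wchi_\lambda^{-}(g) = \mathrm{sgn}(\pi(g))\,\wchi_\lambda^{+}(g)$. For $\alpha \in O(n)$ every cycle of $w_\alpha$ has odd length and is therefore an even permutation, so $\mathrm{sgn}(w_\alpha) = 1$ and $\wchi_\lambda^{+}(\sigma_\alpha) = \wchi_\lambda^{-}(\sigma_\alpha)$, which is (3). For $\lambda \in D^{-}(n)$ the number of even parts is odd, so $\mathrm{sgn}(w_\lambda) = (-1)^{\,n-\ell(\lambda)} = -1$, giving $\wchi_\lambda^{+}(\sigma_\lambda) = -\wchi_\lambda^{-}(\sigma_\lambda)$.

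Next I would treat the support statement (1) and the orthogonality statement (2). Whenever $\sigma_\alpha$ is conjugate to $z\sigma_\alpha$ the foundational relation forces $\wchi_\lambda(\sigma_\alpha) = -\wchi_\lambda(\sigma_\alpha) = 0$; using the relations $t_j t_k = z\,t_k t_j$ for $|j-k|\geq 2$ one checks that the lifts of two disjoint even cycles anticommute (each contributes an odd number of generators), so conjugating $\sigma_\alpha$ by one such lift yields $z\sigma_\alpha$. This kills every $\alpha$ carrying at least two even parts, in particular every $\alpha \in D^{+}(n)$ with an even part. The remaining classes outside $O(n) \cup D^{-}(n)$ — those with a single even part but repeated odd parts — are not reached by this argument and are instead handled by the characteristic map: the self-associate characters correspond to the $Q_\lambda$, which lie in the subring $\Gamma = \Q[p_1,p_3,p_5,\dots]$ generated by the odd power sums, and the associate differences $\wchi_\lambda^{+}-\wchi_\lambda^{-}$ vanish on $O(n)$ by (3) and are supported on $D^{-}(n)$; matching these two families against the class data forces every value off $O(n)\cup D^{-}(n)$ to vanish. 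Statement (2) then reduces, via (1), to the distinct-odd classes of $D^{+}(n)$, where it follows from the orthogonality of the $Q_\lambda$ (the diagonal Gram matrix $\langle Q_\lambda,Q_\mu\rangle = 2^{\ell(\lambda)}\delta_{\lambda\mu}$) transported through the characteristic map.

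Finally, for the modulus in (4) I would use column orthogonality at $\sigma_\lambda$. Since $\lambda$ has distinct parts, $C_{S_n}(w_\lambda) = \prod_j \Z/\lambda_j\Z$ contains no cycle-swapping element, so every lift of a centralizing permutation commutes with $\sigma_\lambda$; hence $\sigma_\lambda \not\sim z\sigma_\lambda$ and $|C_{\widetilde{S}_n}(\sigma_\lambda)| = 2\prod_j \lambda_j$. The non-spin characters contribute $|C_{S_n}(w_\lambda)| = \prod_j \lambda_j$ to the orthogonality sum, and among the spin characters only the pair $\wchi_\lambda^{\pm}$ is nonzero at $\sigma_\lambda$ (the diagonal property of the difference characters on $D^{-}(n)$), so $|\wchi_\lambda^{+}(\sigma_\lambda)|^2 + |\wchi_\lambda^{-}(\sigma_\lambda)|^2 = \prod_j \lambda_j$; combined with the equality already proved this gives $|\wchi_\lambda^{+}(\sigma_\lambda)| = \sqrt{\prod_j \lambda_j/2}$. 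The hard part, and the main obstacle, is fixing the exact phase $i^{(n-\ell(\lambda)+1)/2}$: the modulus computation leaves it undetermined, and pinning it down requires the explicit Clifford-algebra model of the spin representation together with a Gauss-sum-type evaluation of the matrix representing $\sigma_\lambda$. The bookkeeping that isolates exactly the repeated-odd-with-single-even classes in (1) is a secondary delicate point, best settled through the $Q$-function support rather than by direct conjugation.
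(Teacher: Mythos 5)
The paper never proves this theorem: it is quoted as Schur's classical result, with Hoffman--Humphreys and Stembridge cited as sources, so your attempt must be measured against the classical argument rather than anything in the text. Your parts (3) and the antisymmetry half of (4), via $\wchi_\lambda^{-} = (\mathrm{sgn}\circ\pi)\cdot\wchi_\lambda^{+}$, are correct. The trouble starts with (1). Conjugating $\sigma_\alpha$ by the lift $\widetilde{S}$ of a product $S$ of some of its cycles multiplies it by $z^{p(S)(p(w_\alpha)-p(S))}$, where $p$ denotes parity; if $e$ is the number of even parts then $p(w_\alpha)\equiv e \pmod 2$, so this exponent is odd exactly when $e$ is even and $S$ is chosen odd. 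Hence your anticommutation trick kills the classes with an \emph{even} number ($\geq 2$) of even parts, but \emph{not} ``every $\alpha$ carrying at least two even parts'': for $\alpha=(2,2,2)$ or $(4,2,2)$ (odd $e$, repeated parts, hence outside $O(n)\cup D^{-}(n)$) the exponent $p(S)(1-p(S))$ vanishes for every subproduct $S$, and these classes are also not of your leftover shape ``single even part with repeated odd parts.'' Reaching them requires conjugation by lifts of cycle-swapping centralizer elements, i.e.\ the full splitting theorem --- which is exactly the theorem quoted just before the statement (with $\cup$ in place of the misprinted $\cap$), and once you invoke it, (1) is a one-line consequence of $\wchi_\lambda(z g)=-\wchi_\lambda(g)$, with no need for the vague ``matching against class data'' via $Q$-functions, which in any case cannot work: the spin characteristic map only records values on $O(n)$ classes.

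For (2), the Gram identity $\langle Q_\lambda,Q_\mu\rangle = 2^{\ell(\lambda)}\delta_{\lambda\mu}$ is row orthogonality of characters and cannot yield pointwise vanishing at a prescribed column; indeed on columns in $D^{+}(n)\cap O(n)$ the vanishing you set out to prove is false --- by the paper's own Lemma~\ref{lem:spin-n-character}, $\wchi_{(9)}(\sigma_{(5,3,1)}) = 2 \neq 0$ with $(9),(5,3,1)\in D^{+}(9)$ --- so the intended content of (2) is vanishing of the self-associate characters on $D^{-}(n)$ columns, which follows in one line from the tool you already introduced: $\wchi_\lambda=(\mathrm{sgn}\circ\pi)\wchi_\lambda$ for $\lambda\in D^{+}(n)$ while $w_\alpha$ is odd for $\alpha\in D^{-}(n)$. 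Finally, in (4) your column-orthogonality computation of the modulus presupposes that every $\wchi_\mu^{\pm}$ with $\mu\in D^{-}(n)\setminus\{\lambda\}$ vanishes at $\sigma_\lambda$; that ``diagonal property'' is precisely the hard kernel of Schur's theorem, which your sketch nowhere establishes (the $Q$-function remark does not see non-$O(n)$ columns), so the step is circular, and you concede the phase $i^{(n-\ell(\lambda)+1)/2}$ outright. In sum: (1) is incomplete, (2) rests on a non sequitur aimed at a misstated target, and (4) assumes its key input and omits the phase; the standard repairs are the splitting theorem for (1), the sign-twist identity for (2), and Schur's explicit (Clifford-algebra) construction of the spin representations --- not the Morris recursion of Theorem~\ref{thm:M-N-double}, which only governs $\alpha\in O(n)$ --- for the diagonal support and exact value in (4).
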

\begin{remark}
In general, we will write an irreducible spin character of $\widetilde{S}_n$ corresponding to $\lambda$ by $\widetilde{\chi}_\lambda$ and mention $\widetilde{\chi}_\lambda^\pm$ only if explicitly required. 
\end{remark}

For $\alpha \in O(n)$ and $\lambda \in D(n)$, the character value is determined by Morris recursion formula. Morris ~\cite{MR136668} developed the character theory of spin representations of the symmetric group by introducing the shifted diagram, bar length analogous to the Young diagram, hook length, respectively. The shifted diagram corresponding to a partition $\lambda$ with all distinct parts is obtained by placing $\lambda_i$ boxes starting at the diagonal position $(i, i)$. The shift symmetric diagram $SS(\lambda)$ of $\lambda$ with $2n$ boxes is obtained by flipping the shifted diagram of $\lambda$ over the diagonal and then gluing it to the shifted diagram.
 
 The bar length $b_{ij}$ of a box $(i,j)$ is the hook length of the box  $(i,j)$ in the shift symmetric diagram. The dimension of the irreducible spin representation of $S_n$ corresponding the partition $\lambda$ is given by the \emph{bar length formula }: 
 \begin{displaymath}
 \widetilde{\chi}_\lambda(1)= 2^{[n-l(\lambda)/2]} \frac{n!}{B_{\lambda}},
 \end{displaymath}
where $B_{\lambda}$ is the product of all bar lengths of $\lambda$.
\begin{example}
For the partition $\lambda=(5, 2,1)$ we depict the associated shifted diagram and also put the bar lengths in $SS(\lambda)$.
\begin{displaymath}
\ydiagram{5,1+2,2+1}
\quad 
\quad \quad \quad 
\ytableausetup
{mathmode, boxsize=2em}
\begin{ytableau}
\none[*] & 7 & 6 & 5 & 2 & 1 \\
\none[*] & \none[*] & 3 & 2  & \none \\
\none[*] & \none[*] &\none[*] & 1 & \none \\
\none[*] & \none & \none & \none  & \none \\  
\none[*] & \none & \none & \none  & \none \\  
\end{ytableau}
\end{displaymath}
Here $B_{\lambda}=7\cdot 6 \cdot 5 \cdot 2 \cdot 3 \cdot 2 $. 
\end{example}
To state the main result we need to define the notion of bar removal. The removal of a bar of length $l$ from  a partition $\lambda= (\lambda_1, \lambda_2, \ldots, \lambda_k) \in D(n)$  corresponds to removing a part $\lambda_i$ of length $l$ (if exists) or removing two parts $\lambda_i$ and $\lambda_j$ of lengths $r$ and $l-r$ respectively (if exist) or subtracting $l$ from part $\lambda_i$, if the resultant partition belongs to $D(n-l)$. An $l$-bar corresponds a hook of length $l$ in $SS(\lambda)$ and therefore an $l$-bar removal from $\lambda \in D(n)$ corresponds to removing hook of length $l$ from $SS(\lambda)$. The leg length $L(b)$ of a $l$-bar $b$ is the leg length of the associated $l$- hook in shift symmetric diagram.
\begin{example}
We can remove a $2$-bar from $\lambda=(5,2,1)$ in two different ways: removing the second part from $\lambda$ or subtracting $2$ from the first part of $\lambda$. Hence the resultant partitions are $(5,1)$ and $(3,2,1)$ respectively. On the other hand, there is only one way to remove $3$-bar from $\lambda$: removing the second and third parts from $\lambda$.
\end{example}

The number of ways of an $l$-bar removal from $\lambda \in D(n)$ corresponds to the number of hooks of length $l$ in $SS(\lambda)$. Now onwards,  we will always use the language of shift symmetric diagrams for the explicit computations. As mentioned earlier, the following result is useful in determining the character values for spin representations. 

\begin{theorem}(Morris recursion formula)
	\label{thm:M-N-double}
	Let $\lambda \in D(n)$ and let $\alpha \in O(n)$ with $l$ as a part. Then,
	\[
	\wchi_\lambda(\sigma_\alpha) = \sum_{b \text{ is a } l- bar} (-1)^{L(b)} 2^{m(b)} \widetilde{\chi}_{\lambda \setminus b}(\sigma_{\alpha \setminus l}), 
	\]
	where
\[
m(b) = \begin{cases}  1,\,\, \text{ if} \,\, \epsilon(\lambda \setminus b) - \epsilon(\lambda) = 1, \\ 0,  \,\, \text{otherwise}.  \end{cases}
\]	
and
\[
\epsilon(\lambda)  = \begin{cases} 0, \,\, \lambda \in D^{+}(n),  \\ 1, \,\, \lambda \in D^{-}(n). \end{cases}
\]

\end{theorem}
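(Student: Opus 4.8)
The plan is to derive the recursion from the theory of Schur $Q$-functions rather than from the group algebra directly, so that the combinatorics of bars becomes the combinatorics of a Pieri-type product of symmetric functions. Let $\Gamma = \Q[p_1, p_3, p_5, \ldots]$ be the subring of the ring of symmetric functions generated by the odd power sums, equipped with the inner product for which $\langle p_\mu, p_\nu \rangle = \delta_{\mu\nu}\, z_\mu\, 2^{-\ell(\mu)}$. I would first recall the characteristic-map dictionary (as in Hoffman--Humphreys~\cite{MR1205350} and Stembridge~\cite{MR991411}): to each $\lambda \in D(n)$ there is attached the Schur $Q$-function $Q_\lambda$, the $\{Q_\lambda\}$ form an orthogonal basis of $\Gamma$ with $\langle Q_\lambda, Q_\lambda\rangle = 2^{\ell(\lambda)}$, and the power-sum expansion
\[
Q_\lambda = \sum_{\alpha \in O(n)} 2^{\ell(\alpha)}\, z_\alpha^{-1}\, c_\lambda\, \widetilde{\chi}_\lambda(\sigma_\alpha)\, p_\alpha
\]
holds, where $c_\lambda$ is an explicit constant (a power of $2$ depending on $\ell(\lambda)$ and on $\epsilon(\lambda)$, together with a power of $i$ coming from Theorem~\ref{thm:symmetric-spin-characters1}(4)) converting the coefficient of $p_\alpha$ into the spin character value on the class of $\sigma_\alpha$.

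The heart of the argument is a bar-Pieri rule: for the odd power sum $p_l$ and a strict partition $\mu$,
\[
p_l\, Q_\mu = \sum_{b} (-1)^{L(b)}\, 2^{m(b)}\, Q_{\mu \cup b},
\]
where $b$ ranges over the $l$-bars that can be added to $\mu$, $L(b)$ is the leg length of the added bar, and $m(b)$ records whether the addition toggles the $D^{+}/D^{-}$ type. I would prove this in the generating-function realization of the $Q_\lambda$: writing $q_r = Q_{(r)}$ with $\sum_r q_r t^r = \prod_i (1+x_i t)/(1-x_i t)$, the $Q_\lambda$ are the Pfaffian (Schur) expressions in the $q_r$, and the commutator between multiplication by $p_l$ and the $Q$-raising operators expands $p_l Q_\mu$ exactly as a signed, $2$-weighted sum over single-bar additions. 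The leg-length sign and the exponent $m(b)$ fall out of this commutator, the latter precisely because adding a bar that changes the parity of the number of even parts shifts $\epsilon$ by one.

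Granting the dictionary and the bar-Pieri rule, the recursion is then formal. I would take the adjoint of multiplication by $p_l$ with respect to $\langle\,\cdot\,,\,\cdot\,\rangle$, namely the degree-lowering derivation $p_l^{\perp}$, and dualize the Pieri rule. Absorbing the norm ratios $\langle Q_\lambda, Q_\lambda\rangle / \langle Q_{\lambda\setminus b}, Q_{\lambda\setminus b}\rangle$ that arise from the varying orthogonality constants, this yields $p_l^{\perp} Q_\lambda = \sum_b (-1)^{L(b)} 2^{m(b)} Q_{\lambda \setminus b}$, the sum now running over $l$-bar \emph{removals}. Pairing both sides against the expansion of $p_{\alpha \setminus l}$ in the $Q$-basis and unwinding the constants $c_\lambda$ through the characteristic map converts this into $\widetilde{\chi}_\lambda(\sigma_\alpha) = \sum_b (-1)^{L(b)} 2^{m(b)} \widetilde{\chi}_{\lambda \setminus b}(\sigma_{\alpha \setminus l})$, which is the claim; that the bars of $SS(\lambda)$ correspond to the single-bar additions/removals in $\Gamma$ is exactly the identification set up before Theorem~\ref{thm:M-N-double}.

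The main obstacle will be the power-of-two (and factor-of-$i$) bookkeeping rather than any conceptual point. The normalizing constant $c_\lambda$ jumps by a factor of $2$ between $\lambda \in D^{+}(n)$ and $\lambda \in D^{-}(n)$ — reflecting the self-associate versus associate-pair dichotomy of Theorem~\ref{thm:symmetric-spin-characters1} — and it is exactly the interaction of these jumps with the orthogonality norms $2^{\ell(\lambda)}$ that must collapse to the clean exponent $m(b) \in \{0,1\}$ in the final formula. Verifying that every factor of $2$ from the inner product, the characteristic map, and the Pieri rule cancels to leave precisely $2^{m(b)}$, and that the $\epsilon$-shift condition $\epsilon(\lambda \setminus b) - \epsilon(\lambda) = 1$ is the correct trigger, is the delicate computation; the leg-length sign $(-1)^{L(b)}$ and the reduction $\alpha \mapsto \alpha \setminus l$ are comparatively routine once the vertex-operator commutator is in hand.
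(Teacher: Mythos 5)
The first thing to say is that the paper contains no proof of Theorem~\ref{thm:M-N-double} to compare yours against: it is imported as a known result of Morris~\cite{MR136668}, with the surrounding section explicitly sourcing everything to Hoffman--Humphreys~\cite{MR1205350} and Stembridge~\cite{MR991411}. Measured against those sources rather than the paper, your plan --- the characteristic map into $\Gamma=\Q[p_1,p_3,p_5,\ldots]$, a bar-Pieri rule for $p_l Q_\mu$, then dualizing to the derivation $p_l^{\perp}$ and extracting coefficients --- is exactly the classical route, and in outline it is sound.

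But as written the proposal has a concrete internal inconsistency, beyond the admitted unproved core. Your upward Pieri rule and its dualization cannot both carry the coefficient $(-1)^{L(b)}2^{m(b)}$ verbatim: since $\langle Q_\lambda, Q_\lambda\rangle = 2^{\ell(\lambda)}$ and an $l$-bar changes $\ell$ by $0$, $1$ or $2$ (subtracting $l$ from a part, deleting a part of size $l$, or removing two parts summing to $l$), dualizing multiplies each coefficient by $2^{\ell(\lambda)-\ell(\lambda\setminus b)}$, which is not $1$ in general; the clean exponent $m(b)\in\{0,1\}$ appears only after these norm ratios are folded into the normalizations $c_\lambda$ (which are $2^{(\ell(\lambda)+\epsilon(\lambda))/2}$ up to the $i$-powers of Theorem~\ref{thm:symmetric-spin-characters1}(4)) --- so at least one of your two displayed bar-rules is false as stated even though the final recursion is true. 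Relatedly, your gloss that $m(b)$ ``records whether the addition toggles the $D^{+}/D^{-}$ type'' is not the theorem's condition: $m(b)=1$ only in one direction of the toggle, namely $\epsilon(\lambda\setminus b)-\epsilon(\lambda)=1$; a bar with $\epsilon$ dropping from $1$ to $0$ contributes $2^0$. Finally, the dictionary you recall is itself off in its $\alpha$-dependence (the correct power-sum expansion carries $2^{(\ell(\alpha)+\ell(\lambda)+\epsilon(\lambda))/2}z_\alpha^{-1}$, not $2^{\ell(\alpha)}z_\alpha^{-1}c_\lambda$, and no $\alpha$-independent $c_\lambda$ can absorb the discrepancy), which would inject stray factors of $\sqrt{2}$ at each reduction $\alpha\mapsto\alpha\setminus l$. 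Since the bar-Pieri rule is only gestured at via an unperformed vertex-operator commutator, and the power-of-two audit is precisely where these errors live, your proposal is a correct plan whose central lemma and normalization bookkeeping are still owed, not yet a proof.
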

For any partition $\alpha$, the number of its nonzero parts is called the length of $\alpha$ and is denoted by $\ell(\alpha)$.  

\begin{lem}
	\label{lem:spin-n-character} Let $\lambda = (n)$ and $\widetilde{\chi}_\lambda$ be a spin character of $\widetilde{S}_n$. The  values of the character $\widetilde{\chi}_\lambda$ is zero on all conjugacy classes except the following: 
	\begin{enumerate}  
		\label{lem: (n)-spin-vlaues}
		\item For odd $n$,
		\[
		\widetilde{\chi}_\lambda(\sigma_\alpha) = 2^{\frac{\ell(\alpha) -1}{2}}, \,\,\text{for}\,\, \alpha \in O(n).
		\]
		\item For $n = 2k$,
		\[
		\widetilde{\chi}_\lambda^{\pm}(\sigma_\alpha) = \begin{cases}
		2^{\frac{\ell(\alpha)-2}{2}},& \text{for}\,\, \alpha \in O(n), \\
		\pm i^k\sqrt{k}, & \text{for} \,\,\alpha = (n).
		\end{cases}
		\]
	\end{enumerate}	
\end{lem}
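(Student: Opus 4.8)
The plan is to handle the two parities of $n$ in parallel, exploiting that the one-row partition $(n)$ lies in $D^{+}(n)$ when $n$ is odd and in $D^{-}(n)$ when $n$ is even. Thus for odd $n$ there is a single self-associate character $\widetilde{\chi}_{(n)}$, while for even $n$ we obtain the associate pair $\widetilde{\chi}_{(n)}^{\pm}$, which by Theorem~\ref{thm:symmetric-spin-characters1}(3) coincide on every class $\sigma_\alpha$ with $\alpha \in O(n)$. First I would pin down the support: Theorem~\ref{thm:symmetric-spin-characters1}(1) confines the nonzero values to $\alpha \in O(n) \cup D^{-}(n)$, and parts (2) and (4) then eliminate all the remaining $D^{-}(n)$-classes except, when $n$ is even, the single class $\alpha = (n)$. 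This reduces everything to evaluating the character on $O(n)$, together with the class $(n)$ in the even case.

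The even diagonal value is immediate. For $n = 2k$ we have $(n) \in D^{-}(n)$ with $\ell((n)) = 1$, so Theorem~\ref{thm:symmetric-spin-characters1}(4) gives
\[
\widetilde{\chi}_{(n)}^{\pm}(\sigma_{(n)}) = \pm\, i^{(n-\ell((n))+1)/2}\sqrt{n/2} = \pm\, i^{k}\sqrt{k},
\]
which is exactly the claimed entry.

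The heart of the argument is the evaluation on $O(n)$ via the Morris recursion (Theorem~\ref{thm:M-N-double}). The key structural observation is that the one-row partition $(m)$ has bar lengths exactly $1, 2, \ldots, m$, each occurring once; hence for any part $l$ of $\alpha$ there is a \emph{unique} $l$-bar, its removal simply subtracts $l$ from the single part and yields the one-row partition $(m-l)$, and the recursion therefore never leaves the family of one-row partitions. I would then induct on $\ell(\alpha)$, removing one part $\alpha_1$ at each stage. Since there is a single bar, the recursion carries no sum; its leg length is $0$, so the sign $(-1)^{L(b)}$ equals $+1$; and the multiplicity satisfies $m(b) = 1$ precisely when $\epsilon$ jumps from $0$ to $1$, i.e. (as $\alpha_1$ is odd) exactly when $m$ is odd and $m - \alpha_1 > 0$, while the terminal move into $\emptyset$ contributes $m(b) = 0$. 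Feeding this into
\[
\widetilde{\chi}_{(m)}(\sigma_\alpha) = 2^{m(b)}\,\widetilde{\chi}_{(m-\alpha_1)}(\sigma_{\alpha \setminus \alpha_1}),
\]
the induction collapses to $\widetilde{\chi}_{(m)}(\sigma_\alpha) = 2^{\lfloor(\ell(\alpha)-1)/2\rfloor}$, which equals $2^{(\ell(\alpha)-1)/2}$ for odd $m$ and $2^{(\ell(\alpha)-2)/2}$ for even $m$, matching both displayed formulas.

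The main obstacle is the sign bookkeeping: one must verify that every bar removed during the recursion has leg length $0$, so that no negative signs intervene. This is the one point that genuinely requires inspecting the shift symmetric diagram of $(m)$ rather than formal manipulation; because $(m)$ has a single part, each relevant bar occupies a single row and contributes $L(b) = 0$. As a consistency check, specializing to $\alpha = (1^{n})$ recovers the bar-length dimension $2^{\lfloor(n-1)/2\rfloor}$ of the basic spin representation, confirming that the accumulated sign is $+1$; the parity count governing $\sum m(b)$ is then routine.
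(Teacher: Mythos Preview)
The paper records this lemma without proof, as a standard fact about the basic spin character. Your derivation via the Morris recursion is correct and is the natural route from the tools assembled in Section~\ref{sec:preliminaries}: the one-row partition $(m)$ indeed has a unique $l$-bar for each $l\le m$, its removal yields $(m-l)$ with leg length $0$ in $SS((m))$, and the parity tracking of $\epsilon$ produces the exponent $\lfloor(\ell(\alpha)-1)/2\rfloor$ exactly as you argue. The dimension check at $\alpha=(1^n)$ is a good sanity test for the sign.

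One citation caveat. When you invoke Theorem~\ref{thm:symmetric-spin-characters1}(2) to clear the $D^{-}(n)$-classes, note that the paper's statement of (2) reads $\alpha\in D^{+}(n)$, which after (1) is essentially vacuous; what you actually need for odd $n$ is the vanishing of the self-associate character on $D^{-}(n)$, which follows from $\widetilde{\chi}_\lambda=\mathrm{sgn}\otimes\widetilde{\chi}_\lambda$ for $\lambda\in D^{+}$. For even $n$, the vanishing of $\widetilde{\chi}_{(n)}^{\pm}$ on $D^{-}(n)\setminus\{(n)\}$ is likewise not literally contained in (2) or (4); it is the standard companion fact that the two associate characters coincide away from $\sigma_\lambda$ among the $D^{-}$-classes. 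These facts are well known, so the argument stands, but you should make the appeals explicit rather than bundling them under ``parts (2) and (4)''.
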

\begin{theorem} 
	\label{thm:restriction-from-double-cover-Sn-to-An}
	\begin{enumerate}
		\item 	For each $\lambda \in D^{-}(n)$, the restriction of $\widetilde{\chi}_\lambda^{\pm}$ to $\widetilde{A}_n$ gives one irreducible spin character 
		\[
		\wchi_\lambda^{ +}|_{\widetilde{A}_n} = 	\wchi_\lambda^{ -}|_{\widetilde{A}_n} = \langle\wchi_\lambda\rangle^\downarrow.
		\]
		\item For each $\lambda \in D^{+}(n)$, the restriction of $\widetilde{\chi}_\lambda^{\pm}$ to $\widetilde{A}_n$ gives two conjugate irreducible spin characters
		\[
		\wchi_\lambda|_{\widetilde{A}_n} = \langle\widetilde{\chi}_{\lambda}\rangle^+ + \langle\widetilde{\chi}_{\lambda}\rangle^-.
		\]
		\item $\langle\wchi_\lambda\rangle^{\pm}$ vanishes on classes projecting to a cycle type $\mu \neq \lambda$ that is not in $O(n)$. 
		\item For $\lambda = (\lambda_1, \lambda_2,\cdots, \lambda_{\ell(\lambda)} ) \in D^{+}(n)$, define 
		\[
		\Delta^{\lambda}(\sigma) = \begin{cases}  \pm i^{\frac{n-\ell(\lambda)}{2}} \sqrt{\prod_j \lambda_j},  & \text{if}\,\, \text{type}(\sigma) = \lambda, \\ 
		0, & \text{otherwise}.
		\end{cases}
		\]
		Then we have 
		\[
		\langle\wchi_\lambda\rangle ^{\pm}(\sigma) = \frac{1}{2}(\wchi_\lambda \pm \Delta^{\lambda})(\sigma).
		\]
	\end{enumerate}
\end{theorem}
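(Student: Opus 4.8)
The plan is to derive the whole statement from Clifford theory for the index-two normal subgroup $\widetilde{A}_n \trianglelefteq \widetilde{S}_n$, the only genuinely spin-specific ingredient being the behaviour of irreducible spin characters under tensoring by the sign. Write $\varepsilon = \mathrm{sgn}\circ\pi$ for the nontrivial linear character of $\widetilde{S}_n$ with kernel $\widetilde{A}_n$; since $\pi(z)=1$ we have $\varepsilon(z)=1$, so $\wchi_\lambda\otimes\varepsilon$ is again an irreducible spin character. First I would record that $\varepsilon(\sigma_\alpha)=\mathrm{sgn}(w_\alpha)=(-1)^{n-\len(\alpha)}$ equals $+1$ for every $\alpha\in O(n)$ (a sum of $\len(\alpha)$ odd parts has the parity of $\len(\alpha)$) and equals $-1$ for every $\alpha\in D^{-}(n)$ (here $n-\len(\alpha)=\sum_i(\alpha_i-1)$ is congruent mod $2$ to the number of even parts of $\alpha$, which is odd by definition of $D^{-}(n)$).

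Using Theorem~\ref{thm:symmetric-spin-characters1} I would next identify the associate $\wchi_\lambda\otimes\varepsilon$. Since $\varepsilon\equiv 1$ on $O(n)$ and the spin characters are separated by their values on the classes in $O(n)$ (members of a $D^{-}$ pair agreeing there, by part~(3)), the character $\wchi_\lambda\otimes\varepsilon$ corresponds to the same $\lambda$. For $\lambda\in D^{+}(n)$ there is a single such character, forcing $\wchi_\lambda\otimes\varepsilon=\wchi_\lambda$ (self-associate); for $\lambda\in D^{-}(n)$, evaluating at $\sigma_\lambda$ where $\varepsilon=-1$ and using part~(4) of Theorem~\ref{thm:symmetric-spin-characters1} gives $(\wchi_\lambda^{+}\otimes\varepsilon)(\sigma_\lambda)=-\wchi_\lambda^{+}(\sigma_\lambda)=\wchi_\lambda^{-}(\sigma_\lambda)$, whence $\wchi_\lambda^{+}\otimes\varepsilon=\wchi_\lambda^{-}$. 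Parts~(1) and~(2) are then immediate from the index-two Clifford dichotomy: a self-associate irreducible ($\lambda\in D^{+}$) restricts to a sum of two distinct $\widetilde{S}_n$-conjugate irreducibles $\langle\wchi_\lambda\rangle^{+}+\langle\wchi_\lambda\rangle^{-}$, while for an associate pair ($\lambda\in D^{-}$) the restrictions coincide and stay irreducible, giving $\wchi_\lambda^{+}|_{\widetilde{A}_n}=\wchi_\lambda^{-}|_{\widetilde{A}_n}=\langle\wchi_\lambda\rangle^{\downarrow}$.

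For the remaining parts I fix $\lambda\in D^{+}(n)$ and set $\Delta^{\lambda}:=\langle\wchi_\lambda\rangle^{+}-\langle\wchi_\lambda\rangle^{-}$, so that $\langle\wchi_\lambda\rangle^{\pm}=\tfrac12(\wchi_\lambda\pm\Delta^{\lambda})$ holds formally. Because $\langle\wchi_\lambda\rangle^{-}(\sigma)=\langle\wchi_\lambda\rangle^{+}(v\sigma v^{-1})$ for an odd $v$, the function $\Delta^{\lambda}$ vanishes on every $\widetilde{A}_n$-class stable under $\widetilde{S}_n$-conjugation, i.e.\ $\Delta^{\lambda}$ is supported on the classes of $\widetilde{A}_n$ that split relative to $\widetilde{S}_n$. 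Self-associacy moreover confines the support of $\wchi_\lambda$: since $\wchi_\lambda\otimes\varepsilon=\wchi_\lambda$ and $\varepsilon=-1$ on $D^{-}(n)$, the character $\wchi_\lambda$ vanishes on all of $D^{-}(n)$, and combined with part~(1) of Theorem~\ref{thm:symmetric-spin-characters1} its support lies in $O(n)$. Granting the support statement of part~(4) (that $\Delta^{\lambda}$ is carried by the class of type $\lambda$ alone), part~(3) follows at once: for $\mu\neq\lambda$ with $\mu\notin O(n)$ both $\wchi_\lambda$ and $\Delta^{\lambda}$ vanish on type $\mu$, hence so does $\langle\wchi_\lambda\rangle^{\pm}=\tfrac12(\wchi_\lambda\pm\Delta^{\lambda})$.

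The crux, and the step I expect to be the main obstacle, is part~(4): that $\Delta^{\lambda}$ is supported precisely on the class of type $\lambda$ and equals $\pm i^{(n-\len(\lambda))/2}\sqrt{\prod_j\lambda_j}$ there. The support claim rests on the centralizer computation in the double cover identifying which even classes split in $\widetilde{A}_n$ --- the spin analogue of the classical fact that in $A_n$ exactly the classes of distinct odd parts split --- whose outcome is that within $O(n)$ only type $\lambda\in D^{+}(n)$ supports a nonzero difference. Granting the support, the magnitude $\sqrt{\prod_j\lambda_j}$ is forced by orthonormality: $\langle\langle\wchi_\lambda\rangle^{+},\langle\wchi_\lambda\rangle^{-}\rangle_{\widetilde{A}_n}=0$ and $\langle\langle\wchi_\lambda\rangle^{\pm},\langle\wchi_\lambda\rangle^{\pm}\rangle_{\widetilde{A}_n}=1$ give $\langle\Delta^{\lambda},\Delta^{\lambda}\rangle_{\widetilde{A}_n}=2$, which upon inserting the order of the centralizer of $\sigma_\lambda$ pins down $|\Delta^{\lambda}(\sigma_\lambda)|$. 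The phase $i^{(n-\len(\lambda))/2}$ is fixed exactly as in Schur's evaluation of the $\widetilde{S}_n$ associate values in Theorem~\ref{thm:symmetric-spin-characters1}(4); the close parallel of the formula $i^{(n-\len(\lambda)+1)/2}\sqrt{\prod_j\lambda_j/2}$ there reflects the $D^{-}\leftrightarrow D^{+}$, $\widetilde{S}_n\leftrightarrow\widetilde{A}_n$ duality between the two splitting phenomena. This explicit computation is carried out in Hoffman--Humphreys~\cite{MR1205350} and Stembridge~\cite{MR991411}, which I would either quote directly or reproduce via the orthogonality-plus-phase argument just sketched.
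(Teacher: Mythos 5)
The paper contains no internal proof of this theorem to compare against: it is stated as a preliminary, imported wholesale from Hoffman--Humphreys~\cite{MR1205350} and Stembridge~\cite{MR991411} (the section explicitly says all results there are from those sources). Measured against that, your Clifford-theoretic reduction is sound and actually buys more self-containedness than the paper for parts (1)--(3): the parity computation $\varepsilon(\sigma_\alpha)=(-1)^{n-\len(\alpha)}$ (so $\varepsilon\equiv 1$ on $O(n)$ and $\varepsilon\equiv -1$ on $D^{-}(n)$), the identification $\wchi_\lambda^{+}\otimes\varepsilon=\wchi_\lambda^{-}$ for $\lambda\in D^{-}(n)$ forced by evaluating at $\sigma_\lambda$ via Theorem~\ref{thm:symmetric-spin-characters1}(4), the index-two dichotomy yielding (1) and (2), and the neat observation that self-associacy kills $\wchi_\lambda$ on $D^{-}(n)$ classes are all correct. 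One input you use without proof is that non-associate spin characters are separated by their values on $O(n)$ classes; this is true (it is the nonsingularity of Schur's reduced spin character table, equivalently linear independence of the Schur $Q$-functions), but it is not among the paper's stated preliminaries, so it is itself an import from the same literature.

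The one substantive gap --- which you candidly flag --- is part (4), and your fallback argument there does less than you suggest. Granting that $\Delta^{\lambda}$ is supported on classes of type $\lambda$, orthonormality pins down only the magnitude $|\Delta^{\lambda}(\sigma_\lambda)|$; it does not establish the support claim itself, which is the real content: $\Delta^{\lambda}$ is a priori carried by \emph{all} $\widetilde{A}_n$-classes that are non-stable under $\widetilde{S}_n$-conjugation, and these include the classes of every even type in $O(n)\cap D(n)$ (the lifts of the classical split classes of $A_n$), on which $\wchi_\lambda$ is generally nonzero; ruling these out of the support requires either the explicit Clifford-algebra construction or an orthogonality analysis against the full family of difference characters (spin and non-spin), which you do not carry out. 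Likewise no orthogonality computation can fix the phase $i^{(n-\len(\lambda))/2}$. You resolve both by citing \cite{MR1205350} and \cite{MR991411} --- which is precisely what the paper does for the entire theorem, so on net your proposal is a legitimate, somewhat more self-contained route whose one computational crux rests on the same bedrock as the source text.
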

We shall write $\langle\wchi_\lambda\rangle ^{*}$ to denote $\langle\wchi_\lambda\rangle^\downarrow$ for $\lambda\in D^{-}(n)$ and $\langle\wchi_\lambda\rangle ^{\pm}$ for $\lambda\in D^{+}(n)$. The following result follows directly from Theorem~\ref{thm:restriction-from-double-cover-Sn-to-An}(4). 
\begin{cor}
	\label{cor:double-cover-alternating-vs-symmetric}
	Let $\lambda \in D(n)$ and $\tau \in \widetilde{A}_n$ such that $\text{type}(\tau) \neq \lambda$. Then $\widetilde{\chi}_\lambda(\tau) \neq 0$ if and only if $\langle\wchi_\lambda\rangle^*(\tau) \neq 0$. 
	
\end{cor}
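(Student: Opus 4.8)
The plan is to reduce everything to Theorem~\ref{thm:restriction-from-double-cover-Sn-to-An}, treating the two cases $\lambda \in D^{-}(n)$ and $\lambda \in D^{+}(n)$ separately, since the shape of the restriction $\widetilde{\chi}_\lambda|_{\widetilde{A}_n}$ differs between them. In both cases the strategy is the same: express $\langle\wchi_\lambda\rangle^{*}(\tau)$ in terms of $\wchi_\lambda(\tau)$ and show the two vanish simultaneously.

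First I would dispose of the case $\lambda \in D^{-}(n)$. Here, by part (1) of Theorem~\ref{thm:restriction-from-double-cover-Sn-to-An}, the restriction of $\widetilde{\chi}_\lambda^{\pm}$ to $\widetilde{A}_n$ is the single irreducible character $\langle\wchi_\lambda\rangle^{\downarrow} = \langle\wchi_\lambda\rangle^{*}$. Since restriction is just evaluation on the subgroup, $\langle\wchi_\lambda\rangle^{*}(\tau) = \widetilde{\chi}_\lambda(\tau)$ for every $\tau \in \widetilde{A}_n$. Hence $\widetilde{\chi}_\lambda(\tau)\neq 0$ if and only if $\langle\wchi_\lambda\rangle^{*}(\tau)\neq 0$, and in this case the hypothesis $\mathrm{type}(\tau)\neq\lambda$ is not even needed.

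The substance is in the case $\lambda \in D^{+}(n)$, where the restriction splits as $\langle\widetilde{\chi}_\lambda\rangle^{+} + \langle\widetilde{\chi}_\lambda\rangle^{-}$. Here I would invoke part (4) of Theorem~\ref{thm:restriction-from-double-cover-Sn-to-An}, which gives
\[
\langle\wchi_\lambda\rangle^{\pm}(\tau) = \frac{1}{2}\big(\wchi_\lambda(\tau) \pm \Delta^{\lambda}(\tau)\big).
\]
The key observation is that the correction term $\Delta^{\lambda}$ is supported only on elements whose cycle type is exactly $\lambda$: by definition $\Delta^{\lambda}(\sigma) = 0$ whenever $\mathrm{type}(\sigma)\neq\lambda$. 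Thus the hypothesis $\mathrm{type}(\tau)\neq\lambda$ forces $\Delta^{\lambda}(\tau) = 0$, and the displayed identity collapses to $\langle\wchi_\lambda\rangle^{\pm}(\tau) = \tfrac{1}{2}\wchi_\lambda(\tau)$. Consequently $\langle\wchi_\lambda\rangle^{*}(\tau)\neq 0$ if and only if $\wchi_\lambda(\tau)\neq 0$, which is exactly the claim.

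There is no genuine obstacle here; the only point requiring care is to notice that the entire role of the hypothesis $\mathrm{type}(\tau)\neq\lambda$ is to kill the term $\Delta^{\lambda}(\tau)$, after which the equivalence is immediate. I would then combine the two cases to conclude.
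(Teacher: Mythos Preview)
Your proof is correct and follows exactly the approach the paper indicates: the paper merely states that the result ``follows directly from Theorem~\ref{thm:restriction-from-double-cover-Sn-to-An}(4),'' and you have spelled out precisely that deduction, together with the (trivial) $D^{-}(n)$ case via part~(1). Your observation that the hypothesis $\mathrm{type}(\tau)\neq\lambda$ serves solely to make $\Delta^{\lambda}(\tau)$ vanish is exactly the point.
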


	\section{Proof of Theorems~\ref{thm:classification-symmetric} -- \ref{thm:weak-Steinberg-double-cover} }
	\label{section:proofs}
\subsection{Symmetric and Alternating groups}
	\begin{prop}
		\label{prop:odd p-alternating}
		Let $p$ be an odd prime and $\lambda \vdash n$ such that $\lambda \neq (n), (1^n)$. Let $\chi_\lambda^{*}$ be a quasi $p$-Steinberg character of $A_n$.  Then the possible $\lambda$'s are the following or their conjugates. 

\begin{enumerate}[label=(\alph*)]
\item For $n = 3$: $\lambda = (2,1)$ for  $p = 3$. 
\item For $n=4$:
\begin{itemize}
	\item $\lambda = (3,1)$ for $p =3$.
	\item  $\lambda = (2,2)$ for $p= 3$. 
\end{itemize} 
\item For $n = 5$:
\begin{itemize}
	\item $\lambda = (3, 1^2)$ for $p = 3$.
	\item $\lambda = (3,2)$ for $p=5$.
\end{itemize}
\item	For $ n =6$:
\begin{itemize} 
\item $\lambda = (4,2)$ for $p = 3$.
\item $\lambda = (5,1)$ for $p=5$.
\item $\lambda = (3,3)$ for $p=5$. 
\end{itemize} 
\item For $n = 9$: $\lambda = (7,2)$ for $p= 3$.  

\end{enumerate} 
\end{prop}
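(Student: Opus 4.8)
The plan is to reduce the statement to a nonvanishing question for the symmetric group and then to exploit the Murnaghan--Nakayama vanishing criteria. By Lemma~\ref{lemma:deduction-ofsymmetric-nonzero}, for $w\in A_n$ we have $\chi_\lambda^{*}(w)\neq 0$ iff $\chi_\lambda(w)\neq 0$, so $\chi_\lambda^{*}$ is quasi $p$-Steinberg for $A_n$ exactly when $\chi_\lambda$ is nonzero on every $p$-regular even permutation. Since $\chi_{\lambda'}|_{A_n}=\chi_\lambda|_{A_n}$, I may work up to conjugation. The whole task is thus the \emph{necessary} direction: for every $\lambda\neq(n),(1^n)$ not appearing in the list, I must exhibit one $p$-regular even permutation on which $\chi_\lambda$ vanishes. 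Recall that an element $w_\alpha$ is $p$-regular iff no part of $\alpha$ is divisible by $p$, and lies in $A_n$ iff $\alpha$ has an even number of even parts.

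First I would extract the \emph{forced hook lengths}. Using a single cycle $\alpha=(m,1^{n-m})$ with $m$ odd and $p\nmid m$ (an even permutation, and $p$-regular as $p$ is odd), Corollary~\ref{MN12}\,\mn{1} shows that $\chi_\lambda$ vanishes there unless $wt_{m}(\lambda)\geq 1$. Using $\alpha=(m,2,1^{n-m-2})$ with $m$ even, $p\nmid m$, $m\leq n-2$ (two even parts, hence even, and $p$-regular since $2$ is coprime to the odd $p$), the same criterion forces a hook of length $m$. Hence $\lambda$ must possess a hook of \emph{every} length $m$ coprime to $p$ with $m\leq n-2$, together with the odd such $m\in\{n-1,n\}$. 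Of the two consecutive integers $n-2,n-3$, at least one is coprime to $p$, and both are $\leq n-2$, so $\lambda$ has a hook of some length $m_0\geq n-3$. As the maximal hook length of $\lambda$ is $\lambda_1+\len(\lambda)-1$, this gives $\lambda_1+\len(\lambda)-1\geq n-3$, i.e. at most three cells of $\lambda$ lie outside its first row and column. Thus $\lambda$ (up to conjugation) is a \emph{near-hook}, and in particular $\lambda$ is constrained into a short, explicit list of shapes such as $(a,1^b)$, $(a,2,1^b)$, $(a,3,1^b)$, $(a,2,2,1^b)$.

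For each near-hook shape I would then impose that \emph{all} forced coprime-to-$p$ hook lengths up to $\sim n$ actually occur; since the hook-length multiset of a near-hook has a predictable gap just below its maximal length, this eliminates all but lopsided shapes and pins $n$ to a small range, ultimately $n\leq 9$, after which a finite case check against Table~\ref{tab:quasi-An} completes the proof. The main obstacle is precisely this last stage: \mn{1} and \mn{2} (Corollary~\ref{MN12}) detect vanishing only through missing or uniquely-removable hooks, whereas the surviving hook-like shapes (for example the standard-type characters $(n-1,1)$, whose value at $w_\alpha$ equals $m_1(\alpha)-1$) carry all forced hook lengths yet still vanish on a suitable class such as $(5,2,2,1)$ for $n=10$. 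These genuine cancellations must be ruled out by evaluating $\chi_\lambda$ on carefully chosen multi-part $p$-regular even classes via the full recursion of Theorem~\ref{thm:MNrule} (and, for pure hooks, via the closed form for hook-shape character values), while simultaneously taking care \emph{not} to over-prune the borderline survivors $(7,2)$, $(5,1)$, $(3,3)$ and the other entries of the table. Handling this delicate interplay of parity, divisibility by $p$, and Murnaghan--Nakayama cancellation for the finitely many near-hooks is where essentially all of the work lies.
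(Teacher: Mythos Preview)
Your plan is sound and uses exactly the same ingredients as the paper: the reduction to $\chi_\lambda$ via Lemma~\ref{lemma:deduction-ofsymmetric-nonzero}, the Murnaghan--Nakayama vanishing criteria \mn{1}/\mn{2} of Corollary~\ref{MN12} applied to carefully chosen $p$-regular even classes, and a terminal finite check. The only real difference is organizational. The paper opens with a four-way split on the parity of $n$ and on whether $p\mid n$; in each branch it forces a \emph{single} large hook (of length $n$, $n-1$, or $n-2$ as the case allows) and thereby lands directly on one explicit parametric family---$(n-m,1^m)$, $(n-m-2,2,1^m)$, or $(n-m-3,3,1^m)$---which it then whittles down with further \mn{1}/\mn{2} witnesses. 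You instead argue uniformly that \emph{every} $m\le n-2$ with $p\nmid m$ (and the odd $m\in\{n-1,n\}$) must occur as a hook length, deduce the near-hook constraint $\lambda_1+\ell(\lambda)\ge n-2$, and only then branch over the finitely many near-hook shapes. Your route front-loads a clean structural statement and buys a shorter list of shapes to scan, at the cost of a slightly heavier residual case analysis (you must still track the parity/divisibility conditions that the paper has already isolated); the paper's route gets to concrete one-parameter families faster, which makes the \mn{2} eliminations more mechanical. Either way the endgame---exhibiting a $p$-regular even class with exactly one fixed point to kill $(n-1,1)$, and similar tailored witnesses for the other survivors---is the same, and you have identified it correctly.
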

	
\begin{proof} Let $\lambda \vdash n$ such that $\chi_\lambda^*$ is a quasi $p$-Steinberg character of $A_n$.  For the proof, we consider various  cases depending on $p \mid n$ or $p \nmid n$ separately along with the parity of $n$. 
	
	\vspace{.2cm}
\noindent {\bf Case 1a}: Let $p \nmid n$ and $n$ is odd. Let $\alpha = (n)$ then $w_\alpha \in A_n$. By the hypothesis, $\chi_{\lambda}^{*}(w_\alpha) \neq 0$. 
	This along with Lemma~\ref{lemma:deduction-ofsymmetric-nonzero} implies $\chi_\lambda(w_\alpha) \neq 0$. By \mn{1}, we have $wt_{n}(\lambda) \neq 0$. Therefore $\lambda = (n-m, 1^m)$ for some $m \geq 0$. We can further assume, without loss of generality, that $m \leq n-m-1$. Therefore, we have $H(\lambda) = n\cdot(n-m-1)! \cdot m!$ (see Figure~\ref{Hook diagram 1}).
	\begin{figure}[h]
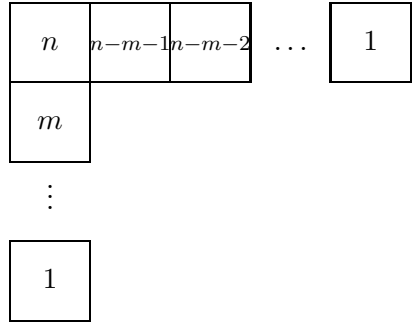

	\ytableausetup
{mathmode, boxsize=2.7 em}
	\begin{ytableau} 
		n & \scriptstyle \tiny{n-m-1} & \scriptstyle \tiny{n-m-2} & \none[\dots] & 1
		\cr
	 m
		\cr \none[\vdots]
	\cr 1	
		\end{ytableau}
		\caption{Hook diagram for $\lambda = (n-m, 1^m) $}\label{Hook diagram 1}
	\end{figure}

We first assume that $m \geq 3$. For $\alpha_1 = (n-2, 1,1)$ and $\alpha_2 = (n-3,2,1) $, $w_{\alpha_1}, w_{\alpha_2} \in A_n$.  We  observe (see Figure~\ref{Hook diagram 1}) that $wt_{n-2}(\lambda) = 0$ and $wt_{n-3}(\lambda) = 0$. Therefore by \mn{1} and Lemma~\ref{lemma:deduction-ofsymmetric-nonzero}, we have $\chi_{\lambda}^*(w_{\alpha_i}) = 0$ for $i=1, 2$. Further, $p$ is odd; therefore at least one of the $w_{\alpha_i}$ is $p$-regular. This is a contradiction to the quasi $p$-Steinberg condition. Hence, we must have $m \leq 2$.  
	
We next assume that $m = 2$ and $p \nmid n-2$. Consider $\alpha = (n-2, 1, 1)$. Then by the hypothesis and Lemma~\ref{lemma:deduction-ofsymmetric-nonzero}, we have $\chi_\lambda(w_\alpha) \neq 0$. Therefore by \mn{1}, $\lambda$ has a hook of length $n-2$. By considering the hook lengths and the fact that $n-2$ is odd, we obtain $n -2 = 1$. Therefore $ n =3$. Since $A_3$ is abelian so only one dimensional representations are obtained in this case and all of these are quasi $3$-Steinberg. This is included in (a) of Proposition.

 We now consider $m = 2$ and $p \mid n-2$. In this case, we have the following:
\begin{enumerate}[label = (\roman*)]
	\item For $ p \neq 3$, we have $n > 5$. By \mn{2},  $\chi_\lambda^*(\alpha) = 0$ for $\alpha = (n-4, 3, 1)$. Since $w_\alpha$ is $p$-regular, $\chi_\lambda^*$ is not quasi $p$-Steinberg.

	\item For $p = 3$ and $n \geq 7$, we use  $ \alpha = (n-6, 5, 1)$  as above to obtain that $\chi_\lambda^*$ is not quasi $3$-Steinberg. For $p=3$ and $n = 5$, we have $\lambda = (3, 1^2)$, therefore $\lambda = \lambda'$ in this case. By the character values, it follows that this is a quasi $3$-Steinberg character and is included in (c) of Proposition. 
\end{enumerate} 

At last, we consider the remaining case of $m =1$. For $n=3$, we have already seen argument above. So now onwards, we assume that $n \geq 5$. In this case, either $p \nmid n-3$ or $p\nmid n-5$. In the first case, we use $\alpha  = (n-3, 2, 1)$ and in the second case we use  $\alpha = (n-5, 4, 1)$ combined with \mn{2} as above to obtain a contradiction to quasi $p$-Steinberg condition.

\noindent {\it \bf Case 1b}: Let $p \nmid n$ and $n$ is even. We deal  the cases $p$ divides $n-1$ or not separately.
	
\noindent \textbf{Case 1b.1} For this case, we assume that $p\nmid n-1$. Let $\alpha = (n-1, 1)$, so $w_\alpha$ is a $p$-regular element of $A_n$. Then by hypothesis,  $\chi_\lambda^*(w_\alpha) \neq 0$. This by Lemma~\ref{lemma:deduction-ofsymmetric-nonzero} implies $\chi_\lambda(w_\alpha) \neq 0$. By $\mn{1}$, $\lambda$  must have a hook of length $n-1$. Since $\lambda \neq (n)$ so, $\lambda=(n-m-2,2,1^m)$ for some  $m \geq 0$. 
\begin{figure}[h]
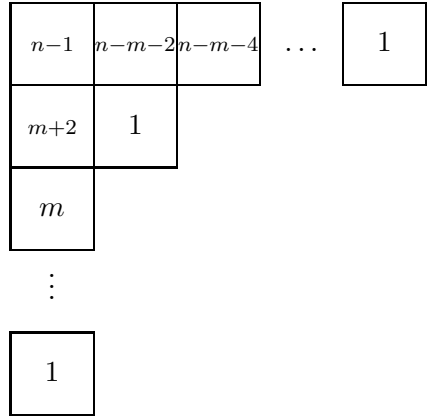

\ytableausetup
{mathmode, boxsize=2.8 em}
\begin{ytableau} 
\scriptstyle n-1 & \scriptstyle n-m-2 & \scriptstyle n-m-4 & \none[\dots] & 1 
\cr
\scriptstyle m+ 2 & 1 
\cr
m \cr 
\none[\vdots]
\cr
1
\end{ytableau}
\caption{Hook diagram for $\lambda=(n-m-2,2,1^m)$}\label{Hook diagram 2}
\end{figure}
In this case (see Figure~\ref{Hook diagram 2}),
\[
H(\lambda) = (n-1) (n-m-2) (m+2) (m!)(n-m-4)!.
\] 

Without loss of generality, we can assume that $2m +4\geq n $.  We consider the case of $m$ even and odd separately. 

{\bf If $m$ is odd:}  In this case, $n \geq 6$.   Consider $\alpha_1 = (m+4, 1^{n-m-4})$ and $\alpha_2 = (m+3, 2, 1^{n-m-5})$. Because $m+2 \geq n-m-2$ and both $n-1$ and $m$ are odd, therefore $wt_{m+3}(\lambda) = 0$. Similarly, $wt_{m+4}(\lambda) = 0$ unless $n-1 = m+4$. We note that atleast one of the $w_{\alpha_i}$ is $p$-regular. By $\mn{1}$, if $w_{\alpha_1}$ is $p$-regular then $\chi_\lambda^*$ is not quasi $p$-Steinberg. Otherwise $w_{\alpha_2}$ is $p$-regular and we must have $m+4 = n-1$ in this case. Therefore if $\chi_\lambda^*$ if quasi $p$-regular then $n = m+5$, $\lambda = (3, 2, 1^m)$. We next note that if $p \mid m+ 3$, then $p \nmid m+1$ and we consider  $\alpha = (m+1, 4)$ and observe that $\chi_\lambda^*(\alpha) = 0$. This gives a contradiction to $\chi_\lambda^*$ being quasi $p$-Steinberg for any prime $p$ such that $p \mid m+ 3$. If $p \nmid m+3$, then we consider  $ \alpha = (m+3, 2)$ and again obtain the contradiction to quasi $p$-Steinberg condition as before. 
 
{\bf If $m$ is even:} In this case $n \geq 4$. For $n = 4$, we must have $\lambda = (2,2)$. For this case, $\chi_\lambda^{*}$ has dimension one so these are quasi $3$-Steinberg and is included in (b) of Proposition. 

Now onwards we assume that $n \geq 6$.  We consider $\alpha_1 = (m+3, 1^{n-m-3})$ and $\alpha_2 = (m+4, 2, 1^{n-m-6})$. Then $wt_{m+4}(\lambda) = 0$ because $m+4 (> m+2)$ is even and $n-1$ is odd. Similarly,  $wt_{m+3}(\lambda) = 0$ unless $n-1 = m+3$. By the argument as for $m$ odd, we obtain that if $\chi_\lambda^*$ is quasi $p$-Steinberg then $n-1 = m+3$. So $n = m + 4$ and $\lambda = (2, 2, 1^m)$. We consider the hook lengths for this $\lambda$. 

If $ p \nmid m+1$, then $m+1$ is odd and so by arguments as above, we must have $m+1 = 1$, i.e. $m = 0$. This contradicts the assumption $n \geq 6$. Therefore, we do not obtain any quasi $p$-Steinberg $\chi_\lambda^*$ in this case.

 On the other hand, if $ p \mid  m+1$ and $n (= m+4 )> 6$ then we consider $\alpha = (m, 4)$.  Again by \mn{2}, we obtain that $\chi_\lambda^*$ is not quasi $p$-Steinberg. For $n = 6$, the assumption  $p \mid n-3$ gives $p = 3$. However for $p = 3$ and $n = 6$, we will obtain $p \mid n$. This is a contradiction to our assumption that $p \nmid n$. So this case also does not arise.

\vspace{.2cm}
\noindent 	\textbf{Case 1b.2}: In this case, $n$ is even, $p \nmid n$ and $ p \mid n-1$. In particular, $p\nmid n-2$. Let $\alpha = (n-2, 2)$. Then $w_\alpha \in A_n$ and by hypothesis and Lemma~\ref{lemma:deduction-ofsymmetric-nonzero} we must have $\chi_\lambda^*(w_\alpha) \neq 0$. Then by \mn{1},  $wt_{n-2}(\lambda) \neq 0$. This implies, $\lambda=(n-1, 1)$, $(n-2, 2)$, $(n-m-3,3,1^{m})$ or their conjugate. 

We first assume that $\lambda = (n-1, 1)$. Let $\alpha = (n-6, 3, 2, 1)$, then $w_\alpha \in A_n$. By $\mn{2}$, we obtain $\chi_\lambda^*(\alpha) = 0$.

We observe that for $p \neq 3, 5$ and $n \geq 8$, $w_\alpha$ is $p$-regular so we obtain that $\chi_\lambda^*$ is not quasi $p$-Steinberg in this case. Therefore $n < 7$ for $p>5$. But this case never arises as $p \mid n-1$. This implies, we must have $ p = 3$ or $ p = 5$. For $p = 3$ and $n \geq 9$,  the partition $\alpha = (n-8, 5, 2, 1)$ will give the required contradiction. Similarly, $p = 5$, $n \geq 9$, $\alpha = (n-8, 4, 3, 1)$ will lead to a contradiction. So the only remaining cases are 
 $\lambda = (3,1)$ for $p =3$ and $\lambda = (5, 1)$ for $p=5$. These are included in (b) and (d) of the Proposition.

Next, we consider $\lambda = (n-2, 2)$, see Figure~\ref{Hook diagram 2} for hook lengths.  Now observe that  $n-3$ does not appear as a hook length in $H(\lambda)$. Since $p \nmid n-3$ and $n-3$ is odd, we must have $n-3 = 1$. Therefore $\lambda = (2, 2)$ and is included in (b) of Proposition.   

At last, we consider $\lambda = (n-m-3,3,1^{m})$. Also consider the hook diagram below: 

\begin{figure}[h]
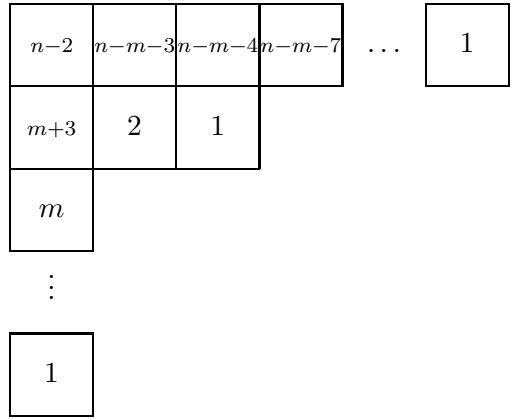

\ytableausetup
{mathmode, boxsize=2.8 em}
\begin{ytableau} 
\scriptstyle n-2 & \scriptstyle n-m-3 & \scriptstyle n-m-4 & \scriptstyle n-m-7 & \none [\dots] & 1 
 \cr
 \scriptstyle m+3 & 2 & 1
 \cr
 m
 \cr
 \none[\vdots]
 \cr
 1
\end{ytableau}
\caption{Hook diagram for $\lambda=(n-m-3,3,1^m)$}\label{Hook diagram 3}
\end{figure}
Since $p \nmid n-3$, $\lambda$ must have a hook of length $n-3$. Then 
$n-3=n-m-3$ or $n-3=m+3$ (as indicated above).
This implies either $\lambda = (n-3, 3)$ or $(3, 3, 1^m)$. Consider their hook diagrams:

\[
\ytableausetup
{mathmode, boxsize=2 em}
\begin{ytableau} 
\scriptstyle n-2 & \scriptstyle n-3 & \scriptstyle n-4 & \scriptstyle n-7 & \none [\dots] & 1 
 \cr
3 & 2 & 1
\end{ytableau}
\quad 
\ytableausetup
{mathmode, boxsize=2 em}
\begin{ytableau} 
 \scriptstyle n-2 & 3 & 2
 \cr
\scriptstyle n-3 & 2 & 1
 \cr
 \scriptstyle n-6
 \cr
 \none[\vdots]
 \cr
 1
\end{ytableau}
\]

In both cases, since $p\nmid n-5$, by the hypothesis $\lambda$ must have a hook of length $n-5$. Then either $n-5 = 1$ or $n-5 =3$. For $n = 6$, we obtain $\lambda = (3, 3)$, which is quasi $5$-Steinberg and is included in (d) of Proposition. For $n = 8$, we obtain a contradiction by using  $\alpha = (4, 4)$.

		\vspace{.2cm}
\noindent 	\textbf{Case 2a}:  In this case, we consider $p\mid n$ and $n$ is odd. Then $p \nmid n-2$ and $n-2$ is odd. So by $\mn{1}$, $\lambda$ must have hook length equal to $n-2$. Therefore we obtain that $\lambda$ must be either $(n-1, 1)$ or $(n-2, 2)$ or  $(n-m-3,3,1^{m})$ or their conjugate. 

For $\lambda = (n-1, 1)$. The hook lengths of this case are indicated in Figure~\ref{Hook diagram 1}. For $p=3$ and $n \geq 7$, we consider $\alpha = (n-5, 4, 1)$. By using $\mn{2}$, we observe that $\chi_\lambda^*(w_\alpha) = 0$ and $w_\alpha$ is $p$-regular. This implies that $\chi_\lambda^*$ is not quasi $p$-Steinberg. For $n = 6$ and $p =3$, $\chi_{(5,1)}^*(w_{(5,1)}) = 0$ and therefore is not quasi $p$-Steinberg representation. The only remaining case is of $p = 3$, $n = 4$ and $\lambda = (3,1)$ and this is included in (b) of Proposition. For $p \neq 3$, we consider $\alpha=(n-3, 2, 1)$ and again see that $\chi_\lambda^*$ is not quasi $p$-Steinberg representation. 

For $\lambda = (n-2, 2)$, see Figure~\ref{Hook diagram 2} for hook lengths.   For $p = 3$ and $n-8\geq 3$,  $ \alpha = (n-8, 4, 4)$ combined with \mn{2} gives the contradiction to $\chi_\lambda^*$ being quasi $p$-Steinberg. Therefore we must have  $n=9$.  We check by direct computations that $n = 9$ with $\lambda = (7, 2)$ is quasi $3$-Steinberg and is included in (e) of Proposition. For $p \neq 3$,  
we observe that $p \nmid n -3$ and $n-3$ is even. By $\mn{2}$, $\chi_\lambda^*(\alpha) = 0$  for $ \alpha = (n-3, 2, 1)$ unless $n-3 =2$ . Therefore, the only possible case is $n = 5$, $p =5$ and $\lambda = (3,2)$, which indeed  is a quasi 5-Steinberg character of $A_5$. This is included in (c) of Proposition.

Next, we consider $\lambda = (n-m-3,3,1^{m})$. See Figure~\ref{Hook diagram 3} for hook lengths in this case. For $p = 3$, we have $p \nmid n-4$ and $n-4$ is odd so $\lambda$ must have a hook length equals $n-4$. We obtain that $\lambda$ has following possible shapes: 
\[
(n-4, 3, 1), (n-3, 3), (4, 3, 1^{n-7}), (3, 3, 1^{n-6}).
\]
We deal these cases as below and obtain contradiction using (\textbf{MN1}), (\textbf{MN2}).
\begin{itemize} 
	\item For $\lambda =  (n-4, 3, 1)$ and $n \geq 11$, we use $\alpha = (n-8,5,1,1,1)$ to obtain required contradiction.  The only remaining case is when $n=9$. For this case, we use $\alpha = (4,2,1,1,1)$ to obtain a contradiction. 
	\item For $\lambda = (n-3, 3)$, $\alpha = (n-5, 2, 1,1,1)$ gives the required contradiction. 
	\item For $\lambda = (4, 3, 1^{n-7})$, we use $\alpha = (n-5, 4, 1)$ to obtain a contradiction.
	\item For $\lambda = (3, 3, 1^{n-6} )$, we use $\alpha = (n-4, 1,1,1)$. This along with $p = 3$ and $p \mid n$ implies $\chi_\lambda^*$ is not quasi $p$-Steinberg. 
	\end{itemize}

If $p \neq 3$ then $p \nmid n-3$ and from now onwards we are in the same case as that of Case~1b.2 with $\lambda = (n-m-3,3,1^{m})$ (see figure~\ref{Hook diagram 3}) and conclude that $\chi_\lambda^*$ is not quasi $p$-Steinberg representation.

		\vspace{.2cm}
\noindent	\textbf{Case 2b}: Let $p\mid n$ and $n$ is even. Then $p \nmid n-1$  and $n-1$ is odd. Therefore, we must have a hook of length $n-1$.  This implies $\lambda = (n-m-2,2,1^m)$ for some $m \geq 0$ in this case. The argument in this  case follows parallel to Case~1b.1 except the fact that $p=3$ and $n=6$ is possible. This gives $\lambda = (2, 2, 1^2)$. We check by direct computations that $\chi_\lambda^*$ for $\lambda = (2, 2, 1^2)$ is quasi $3$-Steinberg and is included in (d) of Proposition.  
\end{proof}

In the next Proposition, we consider the case of $p = 2$ and prove the statement parallel to above for this case. 
\begin{prop}
	\label{prop: even p-alternating}
Let $p = 2$. The only possible $\lambda$ for which the $A_n$ character $\chi_\lambda^*$ for $* \in \{ \pm , \downarrow \}$ is quasi 2-Steinberg  are the following or their conjugates: 
\begin{itemize}
	\item[(a)]  $\lambda =  (2,1)$.
		\item[(b)] $ \lambda = (2, 2)$. 
	\item[(c)] $\lambda = (4,1)$.
	\item[(d)] $\lambda = (3, 2, 1)$.
	\item[(e)] $\lambda = (5,2,1)$.
\end{itemize}

\end{prop}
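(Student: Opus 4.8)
The plan is to first reduce the problem to a non-vanishing statement purely inside $S_n$. By Lemma~\ref{lemma:deduction-ofsymmetric-nonzero}, for $w\in A_n$ we have $\chi_\lambda^*(w)\neq0$ if and only if $\chi_\lambda(w)\neq0$. A permutation is $2$-regular exactly when it has odd order, which happens if and only if all of its cycle lengths are odd; every such permutation is even, so the $2$-regular classes of $A_n$ are precisely those of cycle type $\alpha\in O(n)$, and these coincide with the $2$-regular classes of $S_n$. Moreover $\chi_{\lambda'}=\chi_\lambda\otimes\mathrm{sgn}$ agrees with $\chi_\lambda$ on $O(n)$ since $\mathrm{sgn}$ is trivial on odd-order elements; this accounts for the phrase ``or their conjugates''. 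Hence $\chi_\lambda^*$ is quasi $2$-Steinberg for $A_n$ if and only if $\chi_\lambda(w_\alpha)\neq0$ for every $\alpha\in O(n)$, which is exactly the $p=2$ case of Theorem~\ref{thm:classification-symmetric}, matching the $p=2$ rows of Table~\ref{tab:quasi-Sn}. I would give a self-contained Murnaghan--Nakayama argument for this non-vanishing criterion (alternatively it is the content of Bessenrodt--Olsson~\cite{MR2120100}).

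I would split on the parity of $n$ and in each case first pin down the shape of $\lambda$ using the largest available odd class. For odd $n$, the class $(n)\in O(n)$ together with \mn{1} forces $wt_n(\lambda)\neq0$, so $\lambda=(n-m,1^m)$ is a hook. For hooks I would use the character generating identity coming from exterior powers of the standard representation, valid because all parts of $\alpha$ are odd:
\[
\sum_{m=0}^{n-1}\chi_{(n-m,1^m)}(\alpha)\,t^m=\frac{1}{1+t}\prod_{i=1}^{\ell(\alpha)}\bigl(1+t^{\alpha_i}\bigr),\qquad \alpha\in O(n).
\]
Evaluating at $\alpha=(n-2,1,1)$ gives $(1+t^{n-2})(1+t)$, whose $t^m$-coefficient vanishes for $2\le m\le n-3$; thus the only surviving hooks are $(n-1,1)$ and its conjugate $(2,1^{n-2})$. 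For $n\ge7$ the value $\chi_{(n-1,1)}(\alpha)=(\#\text{fixed points})-1$ vanishes on any $\alpha\in O(n)$ with a unique $1$-part, which exists because $n-1$ is then a sum of two odd parts $\ge3$; this leaves only $n=3,5$, i.e. $(2,1)$ and $(4,1)$, which I would confirm directly on the few classes of $O(3)$ and $O(5)$.

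For even $n$ the largest odd class is $(n-1,1)\in O(n)$, and \mn{1} forces $\lambda$ to have a hook of length $n-1$, i.e. $\lambda=(a,2,1^b)$ with $a+b=n-2$ (a near-hook); replacing $\lambda$ by its conjugate I may assume $a\ge b+2$. The hook lengths of $(a,2,1^b)$ other than $n-1$ are then all $\le a$, so for any odd $m$ with $a<m<n-1$ the class $(m,n-m)\in O(n)$ satisfies $wt_m(\lambda)=0$ and \mn{1} yields a zero. Such an odd $m$ exists whenever $a\le n-4$, eliminating all near-hooks except $\lambda=(n-2,2)$ and $\lambda=(n-3,2,1)$. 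I would kill $(n-2,2)$ for $n\ge6$ using $(n-3,3)\in O(n)$ (as $n-3$ is not a hook length), and kill $(n-3,2,1)$ for $n\ge10$ using $(n-5,5)$ (this is $(5,5)$ when $n=10$): removing an $(n-5)$-rimhook leaves the shape $(2,2,1)$, on which the remaining $5$-cycle vanishes by \mn{1}. This leaves the survivors $(2,2)$ for $n=4$, $(3,2,1)$ for $n=6$, and $(5,2,1)$ for $n=8$, together with their conjugates, which I would verify by direct Murnaghan--Nakayama evaluation on the few classes of $O(4)$, $O(6)$, $O(8)$.

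The main obstacle is the near-hook case. Unlike the odd-$p$ analysis, for $p=2$ every test class must lie in $O(n)$, so a single application of \mn{1} no longer suffices for the family $(n-3,2,1)$, whose odd hook lengths fill an unbroken range near the top; one must pass to a two-step removal of the \mn{2} type, choosing the odd two-part class $(n-5,5)$ precisely so that the residual character $\chi_{(2,2,1)}$ vanishes on the leftover $5$-cycle (and noting that the collision $n-5=3$ at $n=8$ is exactly what lets $(5,2,1)$ survive). Organizing these odd test classes uniformly across all even $n$, while separately checking the small exceptional $n$ where $O(n)$ has very few classes, is the technical heart of the argument.
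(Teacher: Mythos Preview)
Your proposal is correct and follows essentially the same route as the paper: reduce to non-vanishing of $\chi_\lambda$ on $O(n)$ via Lemma~\ref{lemma:deduction-ofsymmetric-nonzero}, split on the parity of $n$, use the largest available odd class to force a hook or near-hook shape, and then eliminate the remaining families with the same test classes (in particular $(n-5,5)$ for $\lambda=(n-3,2,1)$). The only genuine difference is cosmetic: for odd $n$ you invoke the hook-character generating function and the fixed-point formula for $V_{(n-1,1)}$, whereas the paper argues the identical eliminations directly from \mn{1}/\mn{2}; both lead to the same test classes $(n-2,1,1)$ and $(n-4,3,1)$.
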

\begin{proof} 
We first assume that $n$ is odd. Then for $\alpha = (n)$, the element $w_\alpha \in A_n$ is $2$-regular.  Therefore by the hypothesis,  Lemma~\ref{lemma:deduction-ofsymmetric-nonzero} and \mn{1}, $\lambda$ itself must be a hook of the form  $(n-m,1^m)$. Without loss of generality, we can further assume that $n - m- 1 \geq m$. For this $\lambda $, hook lengths are given in figure~\ref{Hook diagram 1}. Note that if $n-(n-m-1)=m+1>2$, then there exists an odd number $q$ such that $n-m-1 < q < n$ and there is no hook of length $q$. This will contradict the hypothesis because for $\alpha = (q, 1^{n-q})$, element $w_\alpha$ is $2$-regular and $\chi_\lambda^*(w_\alpha) = 0$. Therefore we must have $m\leq 1$, i.e. $\lambda=(n-1,1)$. For $n \geq 7$, the character value $\chi_{(6,1)}^*$ is zero on $w_\alpha$ for $ \alpha = (n-4, 3, 1)$ by $\mn{2}$ and $w_\alpha$ is a $2$-regular element of $A_n$. This implies that either $n = 3$ or $n = 5$. For $n=3 $ and $n = 5$, it follows by direct computations that $\chi_{(2,1)}^*$ and $\chi_{(4,1)}^*$ are quasi $2$-Steinberg representations.

\vspace{.2cm}
\noindent \textbf{Case2}: We next assume that $n$ is even. Then $p$ does not divide odd number $n-1$. So $\lambda$ has must have hook length equals $n-1$. This in particular implies, $\lambda=(n-m-2, 2, 1^m)$ for $m \geq 0$. The hook lengths for this $\lambda$ are given in figure~\ref{Hook diagram 2}. Without loss of generality, we can further assume that $n-m-4 \geq m$. If $(n-1)-(n-m-2) > 2$ then there exists an odd number $q$ lying between $n-m-2$ and $n-1$ which does not appear as a hook number. This by \mn{1} gives contradiction to being quasi 2-Steinberg. So we must have $(n-1)-(n-m-2) \leq 2$. But this implies that $m \leq 1$. For $m = 0$ and $n \geq 6$, the odd hook length $n-3$ is missing for $\lambda = (n-2, 2)$. Therefore this case is not possible. This implies that for $m = 0$, we must have $\lambda = (2, 2)$ which is one dimensional so is quasi $2$-Steinberg. 
For $m = 1$ and $n -5 > 3$, we consider $\alpha (n-5,5)$. Then $w_\alpha$ is $2$-regular and $\chi_\lambda^*(w_\alpha) = 0$ by \mn{2}. Therefore for $m = 1$, the only possible cases are either $n = 6$ or  $n=8$.  By direct computations, we obtain that both $\chi_{(3,2,1)}^*$ and $\chi_{(5, 2, 1)}^*$ are quasi $2$-Steinberg characters of $A_6$ and $A_8$ respectively. 

\end{proof}

\begin{proof}[Proof of Theorems~\ref{thm:classification-symmetric} and \ref{thm:classification-alternating}]
	The proof of Theorem~\ref{thm:classification-alternating}
	follows by Propositions~\ref{prop:odd p-alternating}, \ref{prop: even p-alternating} along with the fact that for $\lambda \neq \lambda'$, we have $\chi_\lambda^\downarrow = \chi_{\lambda'}^\downarrow$.  
	
	We proceed to complete the argument for Theorem~\ref{thm:classification-symmetric}. 
	By the character values given in Lemma~\ref{lemma:deduction-ofsymmetric-nonzero} and the fact that the set of $p$-regular elements of $A_n$ is a subset of $p$-regular elements of $S_n$, it follows that the $\chi_\lambda$ is a quasi $p$-Steinberg representation of $S_n$ implies $\chi_\lambda^*$ for $* \in \{ \pm, \downarrow  \}$ is a quasi $p$-Steinberg representation of $A_n$. To complete the proof, we consider  all the $\lambda$'s obtained in Theorem~\ref{thm:classification-alternating}. 
	Out of these, by using direct computations, we collect all $\ lambda$s such that $\chi_\lambda$ is quasi $p$-Steinberg characters of $S_n$ and obtain  Theorem~\ref{thm:classification-symmetric}.

\end{proof}

\subsection{Double cover of Symmetric and Alternating groups}

\begin{prop} 
	\label{prop:double-cover-odd-prime}
	For odd prime $p$,  both $\widetilde{S}_n$ and $\widetilde{A}_n$ do not have any quasi $p$-Steinberg spin character. 
\end{prop}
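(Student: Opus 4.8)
The plan is to exhibit, for every odd prime $p$ and every $n \geq 4$, a single $p$-regular class of $\widetilde{S}_n$ (and of $\widetilde{A}_n$) on which \emph{every} irreducible spin character vanishes. The support of spin characters is severely constrained: by Theorem~\ref{thm:symmetric-spin-characters1}(1) we have $\widetilde{\chi}_\lambda(\sigma_\alpha) = 0$ for all $\lambda \in D(n)$ whenever $\alpha \notin O(n) \cup D^{-}(n)$. Hence it suffices to produce one $p$-regular cycle type $\alpha$ lying outside $O(n) \cup D^{-}(n)$, and the whole proposition collapses to a single well-chosen class.

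First I would take $\alpha = (2, 2, 1^{n-4})$. This $\alpha$ has a repeated part, so $\alpha \notin D(n)$ and a fortiori $\alpha \notin D^{-}(n)$; and it has an even part, so $\alpha \notin O(n)$. Therefore $\alpha \notin O(n) \cup D^{-}(n)$ and Theorem~\ref{thm:symmetric-spin-characters1}(1) gives $\widetilde{\chi}_\lambda(\sigma_\alpha) = 0$ for every $\lambda \in D(n)$, that is, simultaneously for all irreducible spin characters of $\widetilde{S}_n$. Next I would verify $p$-regularity: the parts of $\alpha$ are $1$ and $2$, both coprime to the odd prime $p$, so $w_\alpha$ is $p$-regular; since the order of a preimage $\sigma_\alpha$ differs from $\mathrm{ord}(w_\alpha)$ only by a possible factor of $2$ coming from the central element $z$, and $p$ is odd, the element $\sigma_\alpha$ is itself $p$-regular (cf.\ Remark~\ref{remark:2-regular-preimage}). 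Thus every spin character of $\widetilde{S}_n$ vanishes at the $p$-regular element $\sigma_\alpha$, so none can be quasi $p$-Steinberg.

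For $\widetilde{A}_n$ I would reuse the same class. The permutation $w_\alpha$ for $\alpha = (2,2,1^{n-4})$ is a product of two transpositions, hence even, so $\sigma_\alpha \in \widetilde{A}_n$ and it remains $p$-regular by the argument above. Because every $\lambda \in D(n)$ has distinct parts while $\alpha$ has a repeated part, we have $\mathrm{type}(\sigma_\alpha) \neq \lambda$, so Corollary~\ref{cor:double-cover-alternating-vs-symmetric} applies and transfers the vanishing $\widetilde{\chi}_\lambda(\sigma_\alpha) = 0$ to $\langle \widetilde{\chi}_\lambda \rangle^{*}(\sigma_\alpha) = 0$ for $* \in \{\pm, \downarrow\}$. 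Consequently every spin character of $\widetilde{A}_n$ also vanishes at a $p$-regular element, finishing the proof.

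There is essentially no obstacle once the class $(2,2,1^{n-4})$ is identified; the content is entirely in observing that spin characters are supported on $O(n) \cup D^{-}(n)$ and that an odd prime always leaves a repeated-even-part class $p$-regular. The only point demanding care is the lift of $p$-regularity from $S_n$ to the double cover, which succeeds precisely because $p$ is odd (so the factor of $2$ introduced by $z$ is harmless); for $p = 2$ this step breaks down, which is exactly why the even prime requires the separate, more delicate analysis carried out in the proof of Theorem~\ref{thm:classification-double-cover}.
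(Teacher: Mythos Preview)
Your proof is correct and genuinely more efficient than the paper's. The paper splits into parity cases: for even $n\geq 6$ it uses the pair $\alpha_1=(n-2,2)$, $\alpha_2=(n-4,4)$ (one of which must be $p$-regular), and for odd $n\geq 7$ the pair $\alpha_1=(n-3,2,1)$, $\alpha_2=(n-5,4,1)$; the small cases $n=4,5$ are handled with $\alpha=(2,2)$ and $\alpha=(2,2,1)$ respectively. Because the paper's test classes lie in $D(n)$, the passage to $\widetilde{A}_n$ via Corollary~\ref{cor:double-cover-alternating-vs-symmetric} fails when $\alpha=\lambda$, forcing extra case analysis for $\lambda=(n-2,2)$, $(n-4,4)$, $(n-3,2,1)$, $(n-5,4,1)$.

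Your single choice $\alpha=(2,2,1^{n-4})$ avoids all of this: it works uniformly for every $n\geq 4$ regardless of parity, it is automatically $p$-regular for every odd $p$ (no need to juggle two candidates), and since it has a repeated part it can never equal any $\lambda\in D(n)$, so the transfer to $\widetilde{A}_n$ via Corollary~\ref{cor:double-cover-alternating-vs-symmetric} goes through with no residual cases. In effect you have noticed that the paper's treatment of $n=4,5$ already contains the right idea and simply extended it to all $n$. The one cosmetic point is that your appeal to Remark~\ref{remark:2-regular-preimage} is slightly off, since that remark concerns $2$-regular lifts; but your inline explanation (that the preimage order differs from $\mathrm{ord}(w_\alpha)$ only by a factor of $2$, which is harmless for odd $p$) is self-contained and correct.
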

\begin{proof} The irreducible spin characters of  $\widetilde{S}_n$ are parametrized by $\lambda \in D(n)$, i.e. the partitions of $n$ with all distinct parts (see Section \ref{double cover}). For $\lambda \in D(n)$, let $\widetilde{\chi}_\lambda$ (respectively, $\widetilde{\chi}_\lambda^*$) be a quasi $p$-Steinberg character of $\widetilde{S}_n$ (respectively, $\widetilde{A}_n$). We consider the case of $n$ even and $n$ odd separately.
	
\textbf{Case 1}:{ \bf  $n$ is even.}
For $n=4$,  we must have $\lambda = (3,1)$. In this case, we consider $\alpha = (2, 2)$ and note that $\sigma_\alpha$ is $p$-regular. Since $\alpha \notin O(n) \cup D^-(n)$, by Theorem~\ref{thm:symmetric-spin-characters1}(1) and Theorem~\ref{thm:restriction-from-double-cover-Sn-to-An}(3), we have $\widetilde{\chi}_\lambda(\sigma_\alpha) = 0$ and $\langle\widetilde{\chi}_\lambda\rangle^*(\sigma_\alpha) =0$. Therefore, this $\lambda$ does not give a quasi $p$-Steinberg representation of $\widetilde{S}_n$ and  $\widetilde{A}_n$.

Now onwards, we assume that $n \geq 6$.  Consider $\alpha_1 = (n-2, 2)$ and $\alpha_2 = (n-4, 4)$. Then at least one of the $\sigma_{\alpha_i}$ for $1 \leq i \leq 2$ is $p$-regular. Since $\alpha_1, \alpha_2 \notin O(n) \cup D^{-}(n)$, by Theorem~\ref{thm:symmetric-spin-characters1}(1), we have $  \widetilde{\chi}_\lambda (\sigma_{\alpha_i}) = 0$ for $i =1,2$. This implies $\widetilde{\chi}_\lambda$ is not quasi $p$-Steinberg character of $\widetilde{S}_n$ for $\lambda \in D(n)$. 

It remains to deal $\langle\widetilde{\chi}_\lambda\rangle^*$ for $\widetilde{A}_n$. By Theorem~\ref{thm:restriction-from-double-cover-Sn-to-An}(3), and considering $\alpha_1$ and $\alpha_2$ as above, we again obtain that $\langle\widetilde{\chi}_\lambda\rangle^\pm$ is not quasi $p$-Steinberg for $\widetilde{A}_n$ unless either $\lambda = (n-2, 2)$ or $\lambda = (n-4, 4)$. Now we handle these two remaining cases. First we assume that $\lambda = (n-2, 2)$. Considering again $\alpha = (n-4, 4)$ and hypothesis, we can further assume that $p \mid n-4$. Therefore, we must have $n \geq 10$. For $\beta = (n-8, 8)$,  the element $\sigma_\beta$ is $p$-regular. By Theorems~ \ref{thm:symmetric-spin-characters1} and \ref{thm:restriction-from-double-cover-Sn-to-An}(3), we have $\langle\widetilde{\chi}_\lambda\rangle^*(\sigma_\beta) = 0$. This gives a contradiction to the quasi $p$-Steinberg property for $\lambda = (n-2, 2)$. At last, we assume that $\lambda = (n-4, 4)$. As above, we can assume that $p \mid n-2$. Therefore, we can further assume that $n \geq 8$. In this case we consider $\alpha = (n-4, 2)$. Then $\sigma_\alpha$ is $p$-regular and by Theorem~\ref{thm:symmetric-spin-characters1}, we have $\widetilde{\chi}_\lambda(\sigma_\alpha) = 0$. Since $\text{type}(\alpha) \neq \lambda$, so by Theorem~\ref{thm:restriction-from-double-cover-Sn-to-An}(3) we obtain $\langle\widetilde{\chi}_\lambda\rangle^\pm(\sigma_\alpha) = 0$. This again gives a contradiction to the quasi $p$-Steinberg condition. 
	
\vspace{.3cm}	
\textbf{Case 2:} {\bf $n$ is odd}.
For $n=5$, either $\lambda = (4,1)$ or $\lambda = (3,2)$. For both of these cases, we consider $\alpha = (2, 2,1)$ and we obtain contradiction in this case parallel to $n=4$ case. 

Now onwards, we assume that $n \geq 7$.  Consider $\alpha_1 = (n-3, 2,1)$ and $\alpha_2 = (n-5, 4, 1)$. Then at least one of the $\sigma_{\alpha_i}$ for i = 1, 2, is $p$-regular. Since $\alpha_1, \alpha_2 \notin O(n) \cup D^{-}(n)$, by Theorem~\ref{thm:symmetric-spin-characters1}(1) the character  $\widetilde{\chi}_\lambda$ of $\widetilde{S}_n$ is zero on both $\sigma_{\alpha_i}$. This gives a contradiction to the quasi $p$-Steinberg property for $\widetilde{S}_n$. 

For $\widetilde{A}_n$, the argument is parallel to the even $n$ case with $n \geq 6$. In this case, the only cases to consider are $\lambda = (n-3, 2,1)$ or $\lambda = (n-5, 4, 1)$. For $\lambda = (n-3, 2, 1)$ we consider $\alpha_1 = (n-4, 2, 2)$ and $\alpha_2 = (n-5, 2,1)$. For $\lambda = (n-5, 4, 1)$, we consider $\alpha_1 = (n-3, 2,1)$ and $\alpha_2 = (n-4, 2, 2)$. It is easy to obtain contradiction using these for these $\lambda$'s.

\end{proof} 
\begin{proof}[Proof of Theorem~\ref{thm:classification-double-cover}] 
Let $\lambda \in D(n)$, such that $\widetilde{\chi}_\lambda$ (respectively, $\langle\widetilde{\chi}_\lambda\rangle^*$) be a quasi $p$-Steinberg character of $\widetilde{S}_n$ (respectively, $\widetilde{A}_n$). 

For $p \neq 2$, the result follows by Proposition~\ref{prop:double-cover-odd-prime}.  Now onwards we assume that $p = 2$.  For $\lambda = (n)$, by Lemma~\ref{lem:spin-n-character} and Theorem~\ref{thm:restriction-from-double-cover-Sn-to-An}, we obtain that  $\widetilde{\chi}_\lambda$ and $\langle\widetilde{\chi}_\lambda\rangle^*$ is quasi $2$-Steinberg character for $\widetilde{S}_n$ and $\widetilde{A}_n$ respectively.
For rest of the proof, we assume that $\lambda \neq (n)$. The proof below works uniformly for both $\widetilde{S}_n$ as well as $\widetilde{A}_n$. An important role in this proof is played by the fact that if $g \in \widetilde{A}_n$ is $p$-regular such that $\text{type}(g) \neq \lambda$ then $\widetilde{\chi}_\lambda(g) \neq 0$. For $\widetilde{S}_n$, this result directly follows from hypothesis and for $\widetilde{A}_n$,  it is obtained from  Corollary~\ref{cor:double-cover-alternating-vs-symmetric} combined with the hypothesis. We shall use this argument in the proof without specifically mentioning it. Now onwards, for every $2$-regular element $w_\alpha \in S_n$, we fix $\sigma'_\alpha \in \widetilde{S}_n$ such that $\sigma'_\alpha$ is $2$-regular preimage of $w_\alpha$(for existence see Remark~\ref{remark:2-regular-preimage}). Recall that for such a choice of $\sigma'_\alpha$, we have  $\widetilde{\chi}_\lambda(\sigma'_\alpha) = 0$ if and only if $\widetilde{\chi}_\lambda(\sigma_\alpha) = 0$.

\textbf{Case 1:} $p \nmid n$. Let $\sigma'_{(n)} \in \widetilde{S}_n$ be a $2$-regular lift of $w_{(n)}$. By hypothesis and the fact that $\sigma'_{(n)}$ is $2$-regular, we have  $\widetilde{\chi}_{\lambda}(\sigma_{(n)})\neq 0$. This combined with Theorem~\ref{thm:M-N-double} implies there must be a hook of length $n$ in $SS(\lambda)$. So $\lambda$ must be $(n-m,m)$ with $m<n-m$ and $m\geq 0$. The hook numbers for $SS(\lambda)$ are depicted in Figure~\ref{Hook diagram 4}. 
\begin{figure}[h]
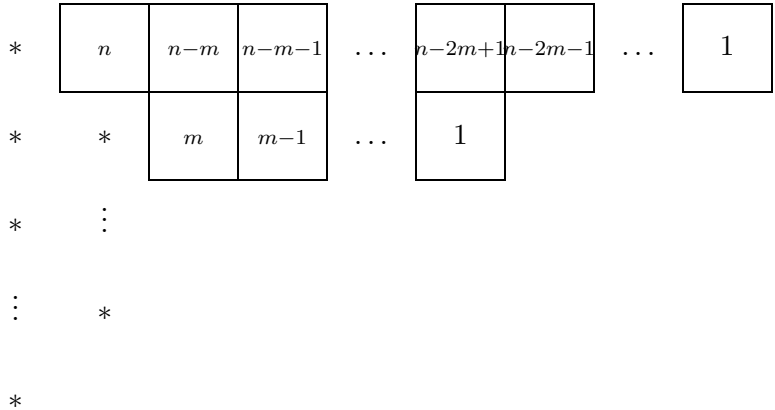

	\begin{displaymath}
	\ytableausetup
	{mathmode, boxsize=3 em}
	\begin{ytableau} 
\none[*] & \scriptstyle n & \scriptstyle n-m & \scriptstyle n-m-1 &  \none [\dots] & \scriptstyle n-2m + 1 & \scriptstyle n-2m-1 & \none[\dots] & 1
	\cr
	\none[*]  & \none[*] & \scriptstyle m & \scriptstyle m-1 & \none [\dots] & 1 
	\cr
	\none[*] & 	\none [\vdots]
	\cr
	\none [\vdots] & \none[*]
		\cr
\none[*]
	\end{ytableau}  
	\end{displaymath}
	\caption{Shift symmetric hook diagram for $\lambda=(n-m,m)$}\label{Hook diagram 4}
\end{figure}

Again  $\widetilde{\chi}_{\lambda}(\sigma_{(n-2, 1^2)})\neq 0$ as $\sigma'_{(n-2, 1^2)}$ is 2-regular. Hence there must be a hook of length $n-2$ in $SS(\lambda)$.	This implies $n-2=n-m-i$ as $n-2 = m$  not possible for  $n \geq 3$. Then $m = 1,2$. For $m=2$, there is no hook of length $n-4$ for $n \geq 7$. But $\sigma_{(n-4)}$ is 2-regular which contradicts the quasi-Steinberg property.  Therefore for $m = 2$ we must have $n \leq 6$.  The only possibility is $\lambda = (3, 2)$. By the direct computations, we observe that $\lambda = (3,2)$ gives a quasi 2-Steinberg character for $\widetilde{S}_n$ as well as for $\widetilde{A}_n$. For $m=1$, there is no hook of length $n-2$ in $SS((n-1,1))$ unless $n=3$. This does not contribute to any spin character.

\textbf{Case 2:}	$p \mid n$. Since $p\nmid n-1$, as above $SS(\lambda)$ must contain a hook of length $n-1$. Therefore, $\lambda$ must be of the form either $(n)$ or $(n-1,1)$ or $(n-m-1,m,1)$. 

 For $\lambda = (n-1, 1)$ with $n-5 > 1$, the character value $\widetilde{\chi}_\lambda$ is zero on  $\sigma'_{(n-5, 3, 1^2)}$. This gives a contradiction to the quasi p-Steinberg character. So  $n\leq 6$ and we must have $\lambda = (3,1)$ or $\lambda = (5,1)$. Both of these are quasi $2$-Steinberg characters for $\widetilde{S}_n$ and $\widetilde{A}_n$. 
For $\lambda = (n-m-1,m,1)$, the hook numbers for $SS(\lambda)$ are depicted in Figure~\ref{Hook diagram 5}.
 \begin{figure} [h]
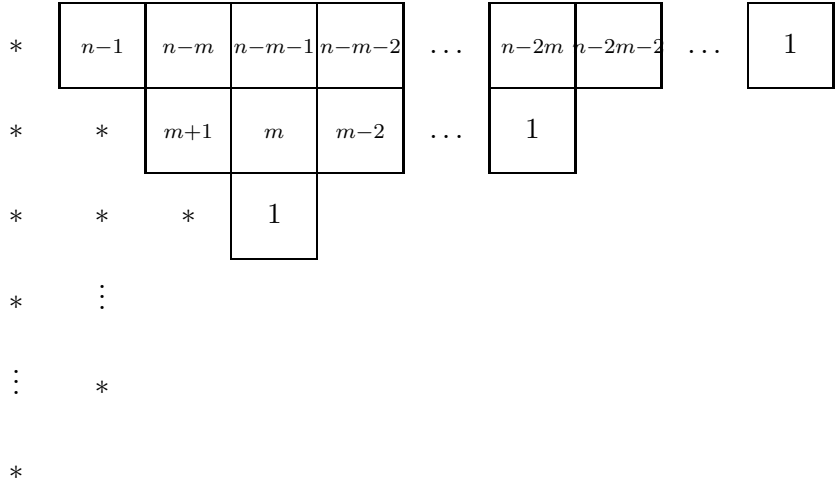

	\begin{displaymath}
	\ytableausetup
	{mathmode, boxsize=2.9 em}
	\begin{ytableau} 
\none[*] & \scriptstyle n-1 & \scriptstyle n-m & \scriptstyle n-m-1 & \scriptstyle n-m-2 &  \none [\dots] & \scriptstyle n-2m  & \scriptstyle n-2m-2 & \none[\dots] & 1
	\cr
	\none[*]  & \none[*] & \scriptstyle m+1 & \scriptstyle m & \scriptstyle m-2 &\none [\dots] & 1 
	\cr
	\none[*] & 	\none[*] & \none[*] & 1
	\cr
	\none[*] & \none [\vdots]
	
		\cr
	\none [\vdots] & \none[*]
		\cr
\none[*]
	\end{ytableau}  
	\end{displaymath}
	\caption{Shift symmetric hook diagram for $\lambda=(n-m-1,m,1)$}\label{Hook diagram 5}
\end{figure}
Since $\sigma'_{(n-3,1^3)}$ is 2-regular,  $\widetilde{\chi}_\lambda(\sigma_{(n-3,1^3)})\neq 0$. Note that $n>2m+1$ as $\lambda$ has distinct parts. For $m \geq 2$ we must have $n-3=n-m$ or $n-3 = n-m-1$. That is $m=3$ or $m= 2$. For $m = 3$,  we consider $\alpha = (n-5, 1^5)$. We must have $n-5 = m=3$. So we get $\lambda=(4,3,1)$. This case is not possible because its character values is zero on an element of type $(3)(3)$. For $m=2$, we again consider  $\alpha = (n-5, 1^5)$.  From hook lengths of $SS(\lambda)$, we obtain that either $n-5 = 3$ or $n-5 = 1$. So $\lambda = (3,2,1)$ for $n=6$ or $\lambda = (5,2,1)$ for $n=8$. Both of these give quasi $2$-Steinberg characters for $\widetilde{S}_n$ and $\widetilde{A}_n$.  
	
\end{proof} 	

\subsection{Proof of Corollaries~\ref{thm:weak-Steinberg-symmetric-alternating} and \ref{thm:weak-Steinberg-double-cover} }We first note that every weak $p$-Steinberg character is a quasi $p$-Steinberg. Conversely, by \cite[Theorem~1]{MR4042}, a quasi $p$-Steinberg character $\chi$ of $G$ is a weak $p$-Steinberg character if and only if $\chi(1) = |G|_p$. From Theorems~\ref{thm:classification-symmetric} and \ref{thm:classification-alternating}, we obtain the complete list of quasi $p$-Steinberg characters of $S_n$ and $A_n$. From the dimension computations, it is easy to verify that the weak $p$-Steinberg characters of $S_n$ and $A_n$ are exactly the ones mentioned in Corollary~\ref{thm:weak-Steinberg-symmetric-alternating}. 

	For $\widetilde{S}_n$ and $\widetilde{A}_n$, we note that for $\lambda = (3,1), (3,2), (3,2,1), (5,1), (5,2,1)$,  representations do not satisfy the required dimension hypothesis. For $\lambda = (n)$, we observe that the dimension of the corresponding representation is $2^{\lfloor (n-1)/2 \rfloor }$.  To complete the proof in this case, we claim that $2^{\lfloor (n-1)/2 \rfloor } < 2 (2^{\nu_2(n!)})$. Therefore enough to prove that $\lfloor (n-1)/2 \rfloor \leq  \nu_2(n!) $. For this we use the Legendre's formula, i.e. for any prime $p$ the valuation of $p$ in $n!$ is given by the following:
	\[
\nu_p(n!) = \sum_{i=1}^\infty \lfloor n/p^i \rfloor.
	\] 
	For our case, $p =2$,  we have $\lfloor \frac{n}{2}\rfloor = \lfloor \frac{n-1}{2}\rfloor$  for odd $n$ and $\lfloor \frac{n}{2}\rfloor > \lfloor \frac{n-1}{2}\rfloor$ for even $n$. Therefore, $\lfloor (n-1)/2 \rfloor \leq  \nu_2(n!)$  for $n\geq 4$. For $n<4$, the result follows by direct computation. This completes the proof of Corollary~\ref{thm:weak-Steinberg-double-cover}.

\section{Further Discussion}
\label{section:further-questions}
Given a positive integer $n$, a partition $\lambda \vdash n$ is called of maximal $p$-weight if $wt_{p}(\lambda)=\lfloor \frac{n}{p} \rfloor$.  Given a prime $p$, let 
\begin{displaymath}
A(n,p):=\lbrace \lambda \vdash n \mid wt_{p'}(\lambda)=\lfloor \frac{n}{p'} \rfloor, \text{for all prime } p'\neq p\rbrace.
\end{displaymath}
It can be shown that if $\chi_{\lambda}$, $\lambda \vdash n$, is a quasi $p$-Steinberg character then $\lambda\in A(n,p)$.  This motivates the following question.
\begin{que}
 Determine $A(n,p)$ for any  prime $p$. 
\end{que}
For $p=2$, $A(n,p)$ is determined in \cite[Theorem 1.1]{MR2120100}. In addition to the above, the following questions seem interesting. 
\begin{que}
What can be the possible dimensions of a $p$-Steinberg character of a finite group?
\end{que}
From Theorems~\ref{thm:classification-symmetric}, \ref{thm:classification-alternating}, and \ref{thm:classification-double-cover}, we obtain that any quasi $p$-Steinberg character  of groups $S_n$, $A_n$ and their double covers have dimension $p^t$ for some $t \in \mathbb Z_{\geq 0}$. The same answer is not true in general. For example, the group $G=\mathrm{PSL}(2,11)$ has two irreducible quasi 5-Steinberg characters of dimension 10.
\begin{que} Determine the quasi $p$-Steinberg characters of all finite simple groups.
\end{que}

\section*{Acknowledgements}
\label{section:acknowledgements}
This research was initiated during the International Centre for Theoretical Sciences (ICTS) program-Group Algebras, Representations and Computation(Code:  ICTS/Prog-garc2019/10) and supported in part by the ICTS. The authors are grateful to Prof. Dipendra Prasad for his questions about Steinberg characters and discussions after that. They warmly thank Prof. Bessenrodt for her comments on this work and provide the above example of $\mathrm{PSL}(2,11)$. The second named author acknowledges the support of the UGC CAS-II grant (Grant No. F.510/25/CAS-II/2018(SAP-I)) and SERB MATRICS grant (MTR/2018/000094).  This work would not have been possible without the extensive computations using Sagemath and  GAP.
\bibliography{reference}{}
\bibliographystyle{siam}
\end{document}